\newtheorem{thm}{Theorem}[section]
\newtheorem{cor}[thm]{Corollary}
\newtheorem{lem}[thm]{Lemma}
\newtheorem{prop}[thm]{Proposition}
\theoremstyle{definition}
\theoremstyle{remark}
\newtheorem{rem}[thm]{Remark}
\newtheorem*{rem*}{Remark}
\numberwithin{equation}{section}
\newcommand{\ovl}[1]{\overline{#1}}
\newcommand{\mQ}{\mathbb{Q}}
\newcommand{\ra}{\rightarrow}
\newcommand{\fp}{\mathfrak{p}}
\def\divides{{\,|\,}}
\def\ndivides{\nmid}
\newcommand{\field}[1]{\mathbb{#1}}
\newcommand{\Q}{\field{Q}}
\newcommand\oline[1] {{\overline{#1}}}
\newcommand{\PP}{{\mathfrak P}}
\newcommand{\fC}{{\mathfrak C}}
\DeclareMathOperator{\Gal}{Gal}
\DeclareMathOperator{\cd}{cd}
\DeclareMathOperator{\Br}{Br}
\DeclareMathOperator{\TBr}{TBr}
\DeclareMathOperator{\ind}{ind}
\DeclareMathOperator{\Hom}{Hom}
\DeclareMathOperator{\image}{Im}
\DeclareMathOperator{\charak}{char}
\DeclareMathOperator{\lcm}{lcm}
\newcommand{\grA}{\mathsf{A}}
\newcommand{\grB}{\mathsf{B}}
\newcommand{\grC}{\mathsf{C}}
\newcommand{\grF}{\mathsf{F}}
\newcommand{\grD}{\mathsf{D}}
\newcommand{\grE}{\mathsf{E}}
\newcommand{\grK}{\mathsf{K}}
\newcommand{\grL}{\mathsf{L}}
\newcommand{\grM}{\mathsf{M}}
\newcommand{\grT}{\mathsf{T}}
\newcommand{\grU}{\mathsf{U}}
\newcommand{\grZ}{\mathsf{Z}}
\DeclareMathOperator{\gr}{\operatorname{\mathsf{gr}}}
\DeclareMathOperator{\Aut}{\operatorname{\mathsl{Aut}}}
\def\hsp{\mspace{1mu}}
\newcommand{\DIM}[2]{[#1{\hsp:\hsp}#2]}
\DeclareMathAlphabet{\mathsl}{OT1}{cmr}{m}{sl}
\renewcommand{\int}{\operatorname{\mathsl{int}}}
\begin{document}

\begin{abstract} We show that a finite-dimensional tame division
algebra $D$ over a Henselian field $F$ has a maximal subfield Galois over
$F$ if and only if its residue division algebra $\oline D$ has a
maximal subfield Galois over the residue field $\oline F$.

This generalizes
the mechanism behind several known noncrossed product constructions to a crossed product criterion for all
tame division algebras, and in particular for all division algebras if
the residue characteristic is~$0$.
If  $\oline F$ is a global field, the criterion leads to a description of the location of noncrossed products among tame division algebras, and their discovery in new parts of the Brauer group.
\end{abstract}

\def\Tech{Department of Mathematics, Technion --- Israel Institute of Technology, Haifa 32000, Israel}

\author{Timo Hanke}

\address{
Lehrstuhl D f\"ur Mathematik\\
RWTH Aachen\
Templergraben 64\\
D-52062 Aachen\\
Germany}

\email{hanke@math.rwth-aachen.de}

\author{Danny Neftin}
\address{
Department of Mathematics,
University of Michigan, Ann Arbor,
530 Church St.,
Ann Arbor, MI 48109-1043
USA}

\email{neftin@umich.edu}

\author{Adrian Wadsworth}

\address{
Department of Mathematics,
University of California, San Diego,
9500 Gilman Drive, La Jolla,
California 92093-0112 USA}

\email{arwadsworth@ucsd.edu}

\title{Galois subfields of tame division algebras}

\keywords{division algebra, tame Brauer group, noncrossed product,
Henselian valuation, Galois maximal subfield, residue division ring}

\subjclass[2000]{Primary 16S35, Secondary}

\maketitle

\section{Introduction}

A division algebra  $D$ finite-dimensional over its center $F$
is called a {\it crossed product} if it
contains a maximal subfield which is Galois over~$F$; otherwise it is
a {\it noncrossed product}.
The question of existence of noncrossed products arose in the
1930's, and was answered affirmatively by Amitsur in 1972
\cite{amitsur:central-div-alg}.
Subsequently, their existence over more familiar fields $F$ has been
 studied by many authors, see for example
\cite{jacob-wadsworth:constr-noncr-prod, tignol:malcev, brussel:noncr-prod, hanke:twisted, BMT:p-adic-curves}.


Recall that for a Henselian valued field $F$, the valuation of~$F$ extends uniquely to a valuation
of $D$ for every  finite-dimensional division algebra~$D$ over~$F$, see for example \cite[\S1]{jacob-wadsworth:div-alg-hensel-field}.
In case $D$ is inertially split, i.e.\ split by an unramified extension
of $F$,
it was  known long ago that $D$ is a crossed product only if the residue
division algebra $\ovl D$ is a crossed product
\cite[Thm.~5.15(b)]{jacob-wadsworth:div-alg-hensel-field}.
This criterion traces back to Saltman~\cite{saltman:noncr-prod-small-exp} who
used it  to construct new noncrossed products of higher
index from ones already known.
By a more complete criterion \cite{hanke:maxsf},
any inertially split $D$ is a crossed product if and only if
$\ovl D$ contains a maximal subfield Galois over the residue field
$\ovl F$.
Note that this is a stronger condition than saying $\ovl D$ is a
crossed product since $\ovl F$ may be a proper subfield of the center
of~$\ovl D$.
This criterion goes back to Brussel \cite{brussel:noncr-prod}, who
used such an argument over complete rank $1$ valued fields to
obtain noncrossed products over fields as elementary as $\Q((t))$.
Subsequently, the criterion has led to a description of
the ``location'' 
of  crossed and noncrossed products among all inertially split division
algebras over $F$,
when $F$ is Henselian with global residue field,
\cite{hanke-sonn:location, hns:existence-bounds}.







In this paper we consider the larger class of division algebras $D$
which are {\it tamely ramified} (or
 {\it tame} for short) over their center a Henselian field $F$,
i.e. split by a tamely ramified extension of $F$. In particular, these
include all division algebras whose degree is prime to the residue
characteristic.
Our main result, Theorem \ref{main.thm}, generalizes the
criterion mentioned above 
to tame division algebras:
\begin{thm}\label{main.thm} Let $F$ be a Henselian field, and  $D$  a
 finite-dimensional tamely ramified division algebra with center $F$.
Then $D$ has a maximal subfield Galois over $F$ if and only if
$\oline{D}$ has a maximal subfield Galois over $\oline F$.
\end{thm}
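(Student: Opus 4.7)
The strategy is to reduce the tame case to the already-known inertially split case (Hanke's criterion cited in the introduction) by exploiting the structure of tame division algebras over a Henselian field.

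For the easy direction, suppose $D$ has a maximal subfield $L$ Galois over $F$. The unique extension of the valuation to $L$ makes $L/F$ tame (as a subfield of the tame $D$), and $\Gal(L/F)$ preserves the valuation ring, so it descends to act on $\ovl L$ with $\ovl L/\ovl F$ Galois. Standard tame-valuation theory identifies inside $\ovl L$ a canonical subfield $K$, Galois over $\ovl F$, obtained by dividing out the totally ramified contribution; a dimension count using $[L:F]=[\ovl L:\ovl F]\cdot|\Gamma_L/\Gamma_F|$ and the analogous formula for $D$ shows that $K$ has the right degree to be a maximal subfield of $\ovl D$.

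For the hard direction, assume $\ovl D$ has a maximal subfield $K/\ovl F$ Galois. Choose $M\subset D$ an inertial lift of $Z(\ovl D)/\ovl F$, so $M/F$ is unramified with $\ovl M = Z(\ovl D)$, and set $E=C_D(M)$. Then $E$ is a division algebra with center $M$, and by construction the inertial ramification of $D$ has been absorbed into $M$, making $E/M$ inertially split with $\ovl E = \ovl D$. The hypothesis on $\ovl D$, applied to $E/M$ via the known inertially split criterion, yields a maximal subfield $L'/M$ of $E$ Galois over $M$. Since $[L':F]=[L':M]\cdot[M:F]=\deg E\cdot[M:F]=\deg D$, the field $L'\subset D$ is maximal in $D$.

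The main obstacle is that $L'$ is a priori only Galois over the intermediate field $M$, not over $F$. To overcome this, one must choose $L'$ stable under the natural $\Gal(M/F)$-action on $E$, so that $\Gal(L'/F)$ fits into a short exact sequence $1\to\Gal(L'/M)\to\Gal(L'/F)\to\Gal(M/F)\to 1$, and then argue that this sequence can be made to split (or at least that such an $L'$ can be found). The plan for this step is to pass to the graded associated algebra $\gr(D)$, where the decomposition into an inertial part and a tame totally ramified part is canonical and the Galois descent from $\Gal(M/F)$ is easier to track at the level of graded symbol factors; one constructs a graded maximal subfield of $\gr(D)$ that is graded-Galois over $\gr(F)=F$, extending both $K$ inside $\gr(\ovl D)$ and the totally ramified Galois data, and then lifts back via the standard correspondence between graded division algebras and tame Henselian ones to obtain the desired $L/F$ inside $D$.
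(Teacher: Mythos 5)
Your proposal has a genuine gap in each direction, and the overall route you sketch does not close.

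In the direction you call easy ($D$ has a maximal subfield $L$ Galois over $F$ $\Rightarrow$ $\ovl D$ has one over $\ovl F$), the dimension count fails. Writing $n=\deg D$, a maximal subfield of $\ovl D$ has degree $\deg\ovl D\cdot[Z(\ovl D):\ovl F]=n/\sqrt{|\ker\theta_D:\Gamma_F|}$ over $\ovl F$, while $[\ovl L:\ovl F]=n/|\Gamma_L:\Gamma_F|$. These match only when $|\Gamma_L:\Gamma_F|=\sqrt{|\ker\theta_D:\Gamma_F|}$, which a given $L$ need not satisfy; for instance, a nicely semiramified $D$ admits a totally ramified Galois maximal subfield $L$ with $\ovl L=\ovl F$, yet a maximal subfield of $\ovl D=Z(\ovl D)$ has degree $n>1$. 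So you cannot in general find the required field inside $\ovl L$ by ``dividing out the totally ramified contribution'' --- the totally ramified contribution is already invisible in $\ovl L$, which is simply too small. This is exactly the difficulty the introduction warns about; the paper's Proposition~\ref{max-residue.prop} circumvents it by \emph{enlarging} the degree-$0$ part, replacing $\gr(L)$ by $(\gr(L)\cap C_{\gr(D)}(\grT))\cdot\grT$ for a maximal graded subfield $\grT$ of the canonical totally ramified algebra $\grC$, and showing this compositum is still maximal and Galois.

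In the other direction, your reduction to the inertially split case does not work. Taking $M$ an inertial lift of $Z(\ovl D)$ and $E=C_D(M)$, one indeed gets $\ovl E=\ovl D$ and $Z(\ovl E)=\ovl M$, so $\theta_E$ is trivial; but $\Gamma_E/\Gamma_M\cong\ker\theta_D/\Gamma_F$, which is nontrivial unless $D$ was already inertially split. Thus $E$ is the product over $M$ of an inertial algebra and a totally ramified one, and is inertially split if and only if $D$ is --- the reduction is circular, and the criterion of Hanke/Jacob--Wadsworth that you want to invoke for $E/M$ does not apply. You also flag, correctly, the secondary problem of making the resulting $L'$ Galois over $F$ rather than just over $M$, and you gesture at the graded algebra as the fix; but the graded argument the paper actually uses is different. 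In Proposition~\ref{prop:gradedmaxsubf} one works inside $\grE=\grU\otimes_{\grZ}\grC$, taking the inertial lift $\grL$ of the given maximal subfield of $\ovl D$ and a maximal graded subfield $\grT$ of $\grC$; the crucial point, Lemma~\ref{ramified.lem}, is that such a $\grT$ is automatically Galois over $\grF$ (not just over $\grZ$), so $\grL\cdot\grT$ is Galois over $\grF$ with no descent problem. Nothing in your sketch supplies this ingredient, so the plan as written does not produce a proof.
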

Theorem \ref{main.thm} is useful in determining which tame division
algebras are crossed products over all Henselian fields $F$ whose
residue field is sufficiently well understood. When $\oline F$ has
 cohomological dimension $1$ or is a local field, we deduce that
all finite-dimensional tame division algebras with center $F$ are crossed products.  When
$\oline F$ is a global field we describe the location of noncrossed
products among tame division algebras, extending
\cite{hanke-sonn:location} and \cite{hns:existence-bounds}, and
locating noncrossed products in new parts of the Brauer~group, see \S\ref{sec:global-residue}.

The main difficulty in proving Theorem \ref{main.thm} lies in the
``only if" implication.
Given a maximal subfield $M$ of $D$, Galois over $F$, neither the
residue field $\oline M$ itself
nor the compositum of $\ovl M$ with the center of~$\ovl D$ need to
be a maximal subfield of $\oline D$.
Hence in the tame case the construction is significantly different
from the inertially split case, where it was enough to consider
$\ovl M\cdot Z(\ovl D)$, 
cf. \cite{hanke:maxsf}.

Our construction of the desired maximal subfield of $\oline D$
uses the theory of graded division algebras which provides a
one-to-one correspondence between tame division algebras
over a Henselian field $F$ and  graded division
algebras over its associated graded field
$\gr(F)$~\cite{hwang-wadsworth:graded}.
It associates to $D$ a
graded division algebra $\gr(D)$  over  $\gr(F)$, and to
a maximal subfield $M$ of $D$ Galois over $F$ a
maximal graded subfield $\gr(M)$ Galois over $\gr(F)$.
Most importantly, it equips  $\gr(D)$  with canonical subalgebras  which can be entwined with  $\gr(M)$ to form  a maximal graded subfield $\grM'$ of $\gr(D)$ with
residue field which is maximal in $\oline D$ and Galois over $\oline F$.
This $\grM'$ lifts to a maximal subfield $M'$ of $D$ Galois over~$F$.



The proof of our theorem gives a good illustration of the utility
of the graded approach in working with valued division algebras.
Many properties of $D$ are faithfully reflected in $\gr(D)$, but
$\gr(D)$ has a simpler structure which is often considerably easier
to work with, as demonstrated by our use of its canonical~subalgebras. 

We thank Uzi Vishne, Eric Brussel, and Jack Sonn for helpful discussions.
We also thank Kelly McKinnie for her interest and questions which
led us to a  simplification of our proof.
This material is based upon work supported by the National Science
Foundation under Award No. DMS-1303990.

\section{Graded and valued algebras}

We first recall the basic definition and facts concerning graded
algebras, based on  \cite{hwang-wadsworth:graded} with the exception
of Section \ref{graded-fld.sec} which is based on
\cite{hwang-wadsworth:graded-fld} and
\cite{mounirh-wadsworth:semiramifed}. A more extensive treatment of
these facts will appear in~\cite{tw}.

Throughout the section we let $\Gamma$ be {\it  a torsion-free abelian
group}.

\subsection{Graded rings and division algebras} \label{13}\label{7}
A {\em graded ring $\grD$ with grade group $\Gamma$} (or a
{\em $\Gamma$-graded ring}) is
a ring with a direct sum decomposition
$\grD=\oplus_{\gamma\in \Gamma}\grD_\gamma$,
where each $\grD_\gamma$ is an additive abelian group  and
$\grD_\gamma\cdot \grD_\delta\subseteq \grD_{\gamma+\delta}$
for all $\gamma, \delta \in \Gamma$.
Set $\Gamma_\grD=\{\gamma\in\Gamma\,|\, \grD_\gamma\neq\{0\}\,\}$ and call the elements in $\grD_\gamma$, $\gamma\in \Gamma$, {\it homogenous}.
A {\it graded homomorphism} $\varphi\colon \grD \ra \grE$ of $\Gamma$-graded
rings is a homomorphism which preserves the grading, i.e.
$\varphi(\grD_\gamma)\subseteq \grE_\gamma$ for all $\gamma\in \Gamma$.

A {\it graded subring} of $\grD$ is a subring $\grE\subseteq\grD$ such
that ${\grE=\oplus_{\gamma\in \Gamma_D} ( \grD_\gamma\cap\grE)}$.
Such a decomposition defines a
$\Gamma$-grading on $\grE$ with $\grE_\gamma=\grD_\gamma\cap E$.
Note that with $\grE$ the centralizer $C_\grD(\grE)$ is also a graded subring of
$\grD$,
hence in particular the center $Z(\grD)$ is a graded subring of $\grD$.

Let $\grD$ be a graded ring with $1\neq 0$.
Then $\grD$ is said to be  a {\it graded division ring} if every
nonzero element of $\grD_\gamma$, $\gamma\in \Gamma_D$, is a unit.
In this case $\grD_0$ is a division ring, and multiplication by any nonzero element from
$\grD_\gamma$ induces an isomorphism $\grD_\gamma\cong \grD_0$ of
$\grD_0$-module; hence, $\grD_\gamma$ is a rank $1$-module over~$\grD_0$.

Commutative graded division rings are called {\it graded fields}.
If $\grD$ is a graded division ring whose center contains a graded
field $\grF$ as a graded subring, then $\grD$ is called a
{\it graded division algebra } over $\grF$.
In this case $\grD_0$ is a division algebra over~$\grF_0$.
Moreover, for a graded $\grF$-subalgebra $\grE\subseteq \grD$,  $\grD$ is free
as an $\grE$-module, cf.
\cite[Paragraph preceeding (1.6)]{hwang-wadsworth:graded},
and the dimension $[\grD:\grE]$ is defined as the rank of $\grD$ as
an $\grE$-module. We shall assume throughout that
{\it all graded division algebras are finite-dimensional}
over their centers.
The {\em degree} of a graded division algebra~$\grD$ is
$\deg\grD:=\sqrt{[\grD:Z(\grD)]}$.




Given two $\Gamma$-graded algebras $\grD$ and $\grE$ over $\grF$, the
tensor product $\grD\otimes_\grF\grE$ is also a $\Gamma$-graded algebra
with $(\grD\otimes_\grF \grE)_\gamma$ generated by all
$d_\alpha\otimes e_\beta$ where
$d_\alpha\in D_\alpha, e_\beta\in E_\beta,$ and $\alpha+\beta=\gamma$.
The double centralizer theorem  is available in the graded setting
\cite[Proposition~1.5]{hwang-wadsworth:graded} and implies, using the
same argument as in the ungraded setting, that for a graded division
algebra $\grD$ over $\grF$, $[\grM:\grF]\leq \deg\grD$ for any graded
subfield~$\grM$, with equality  if and only if $\grM$ is a maximal
graded subfield of $\grD$.


\subsection{Ramification} \label{1}

The following ramification properties of graded division rings are
analogues to ramification properties of valued division rings.
Let $\grD$ be a graded division ring, and $\grE\subseteq \grD$ a
graded division subring.
Then one easily obtains the fundamental equality,
cf.~\cite[(1.7), p.~79]{hwang-wadsworth:graded},
\begin{equation}\label{defectless.equ}
[\grD:\grE]\,=\,[\grD_0:\grE_0]\,|\Gamma_\grD:\Gamma_\grE|.
\end{equation}
We say that  $\grD$ is {\it unramified} over $\grE$ if
$\Gamma_\grD=\Gamma_\grE$ (i.e.\ $[\grD:\grE]=[\grD_0:\grE_0]$); it is
{\it totally ramified} over $\grE$ if
$[\grD:\grE]=|\Gamma_D:\Gamma_E|$ (i.e.\ $\grD_0=\grE_0$).

Let $\grA\subseteq\grD$ be a graded subring that is also a graded
division ring and contains~$\grE$.
Then $\grD/\grE$ is unramified (resp.\ totally ramified) if and only
if $\grD/\grA$ and $\grA/\grE$ are each unramified
(resp.\ totally ramified).

Let $\grF$ be a graded subfield of $Z(\grD)$, so that $\grD$ is a
graded $\grF$-division algebra.
For every $\grF_0$-subalgebra $A$ of $\grD_0$ there is a unique graded
division $\grF$-subalgebra $\grA\subseteq\grD$ with $\grA_0=A$
and $\Gamma_\grA = \Gamma_\grF$.
This $\grA$ is generated  over $\grF$ by $A$ and is canonically
isomorphic to $A\otimes_{\grF_0}\grF$.

Note that the intersection $\grA\cap\grB$ of two graded subrings of
$\grD$ is a graded subring with $(\grA\cap\grB)_\gamma=
\grA_\gamma\cap\grB_\gamma$. In the totally ramified case we have:

\begin{lem}\label{lem:tot-ram1} Let $\grA,\grB$ be two graded subrings
of $\grD$ that are also graded division rings.
 Assume $\grD$ is totally ramified over $\grB$. Then,
\begin{enumerate}
\item\label{3} $\grB=\oplus_{\gamma\in \Gamma_B} \grD_\gamma$;
\item\label{15} 
$\Gamma_{A\cap B}=\Gamma_\grA\cap\Gamma_\grB$.
\end{enumerate}
\end{lem}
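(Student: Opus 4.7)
The plan is to derive (i) from the description of homogeneous components of a graded division ring as rank-one modules over the degree-zero component, and then deduce (ii) as a direct consequence of (i).

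For (i), the key observation is that total ramification of $\grD$ over $\grB$ means $\grD_0 = \grB_0$. Now fix $\gamma \in \Gamma_\grB$. By the structure recalled in \S\ref{7}, $\grB_\gamma$ is a rank-one free module over $\grB_0$, and similarly $\grD_\gamma$ is a rank-one free module over $\grD_0$; since $\grD_0 = \grB_0$, these are both rank-one modules over the same division ring. Picking any nonzero $b \in \grB_\gamma$, the map $x \mapsto xb$ furnishes an isomorphism $\grD_0 \xrightarrow{\sim} \grD_\gamma$, whose image is visibly contained in (and hence equal to) $\grB_\gamma = \grD_0 \cdot b$. Thus $\grB_\gamma = \grD_\gamma$ for every $\gamma \in \Gamma_\grB$, which is exactly (i).

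For (ii), one inclusion is formal: since $(\grA\cap\grB)_\gamma = \grA_\gamma \cap \grB_\gamma$, any $\gamma \in \Gamma_{\grA \cap \grB}$ has both $\grA_\gamma$ and $\grB_\gamma$ nonzero. For the reverse inclusion, take $\gamma \in \Gamma_\grA \cap \Gamma_\grB$. Part (i) gives $\grB_\gamma = \grD_\gamma$, so $\grA_\gamma \subseteq \grD_\gamma = \grB_\gamma$, and therefore $(\grA \cap \grB)_\gamma = \grA_\gamma \cap \grB_\gamma = \grA_\gamma \neq 0$, placing $\gamma$ in $\Gamma_{\grA \cap \grB}$.

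The only real step of substance is (i), and even there the main point is simply to recognize that a nonzero $\grD_0$-submodule of a rank-one free module over a division ring must be the whole module; no obstacle of any depth arises once the rank-one description of homogeneous components is in hand.
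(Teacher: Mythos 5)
Your argument is correct and is essentially the paper's own proof: both hinge on the observation that total ramification gives $\grD_0=\grB_0$, so for $\gamma\in\Gamma_\grB$ the rank-one module $\grD_\gamma$ over $\grD_0$ is already generated by any nonzero element of $\grB_\gamma$, forcing $\grB_\gamma=\grD_\gamma$, and part (ii) then follows formally from (i). The only difference is cosmetic — you spell out the multiplication-by-$b$ isomorphism that the paper invokes implicitly.
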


\begin{proof} Clearly, $\grB\subseteq \grB':=
\oplus_{\gamma\in \Gamma_B}D_\gamma$.
For every $\gamma \in \Gamma_\grB$,
since $\grD_\gamma$ is a rank-$1$ module over $\grD_0=\grB_0$, one
has $\grB_\gamma=\grD_\gamma$.
Hence,  $\grB'=\grB$, showing~(i).

The inclusion $\Gamma_{\grA\cap\grB}\subseteq
\Gamma_\grA\cap \Gamma_\grB$ is obvious.
Conversely, for $\gamma\in \Gamma_\grA\cap\Gamma_\grB$,
using $\grB_\gamma=\grD_\gamma$,
we have $(\grA\cap\grB)_\gamma=\grA_\gamma\cap\grD_\gamma=
\grA_\gamma\neq\{0\}$,
completing~(ii).
\end{proof}




\subsection{Graded field extensions}\label{graded-fld.sec}

Let  $\grL/\grF$ be a finite extension of $\Gamma$-graded fields.
See  \cite[\S2]{hwang-wadsworth:graded-fld} for proofs of the
properties recalled in this paragraph.
As $\Gamma$ is torsion-free, $\grF$ is an integral domain and we can
form its field of quotients $q(\grF)$.
  Then,
$q(\grL) = \grL \otimes_{\grF}q(\grF)$, as ${[\grL \otimes_{\grF}q(\grF):
q(\grF)] <\infty}$ and $\grL$ has no zero divisors.
Hence, ${[\grL:\grF]=[q(\grL):q(\grF)]}$.
Moreover, the ring $\grF$ is integrally closed, and $\grL$ is the
integral closure of $\grF$ in $q(\grL)$.
In addition, for any homogeneous~$c$ in~$\grL$, the minimal
polynomial $m_{q(\grF), c}$ of $c$ over $q(\grF)$ has homogeneous
coefficients in $\grF$. In any graded field extension of
$\grF$, every root of $m_{q(\grF), c}$ is homogeneous of the
same degree as $c$, and there are graded field extensions of
$\grF$ over which $m_{q(\grF), c}$ splits.  It follows that
every $\grF$-algebra automorphism of $\grL$ as ungraded rings
actually preserves the grading on~$\grL$. Therefore, the group of
graded ring automorphisms $\Aut(\grL/\grF)$ is
canonically  isomorphic to $\Aut(q(\grL)/q(\grF))$.

We say that $\grL$ is {\it tame} (or {\em tamely ramified}) over $\grF$
if the field extension $\grL_0/\grF_0$ is separable
and  $\charak \grF_0\ndivides |\Gamma_\grL:\Gamma_\grF|$.
We say that $\grL$  is {\it normal} over~$\grF$ if for every
homogeneous $c\in \grL$, its minimal polynomial $m_{q(\grF), c}$
splits over $\grL$. More restrictively, $\grL/\grF$ is
{\it Galois} if it is Galois as an extension of ungraded commutative
rings, or, equivalently, if $\grF$ is the fixed-ring of
$\Aut(\grL/\grF)$.
These properties of $\grL/\grF$
and are equivalent to  corresponding properties of
their quotient fields, as follows:
\begin{equation}\label{table.equ}\begin{array}{|c|c|c|}
\hline
\grL/\grF & q(\grL)/q(\grF) & \text{Reference} \\
\hline
\text{tame}   & \text{separable} &
\text{\cite[Theorem~3.11(a)]{hwang-wadsworth:graded-fld}} \\
\text{normal} & \text{normal} &
\text{\cite[Lemma 1.2]{mounirh-wadsworth:semiramifed}} \\
\text{Galois} & \text{Galois} &
\text{\cite[Theorem~3.11(b)]{hwang-wadsworth:graded-fld}}\\
\hline
\end{array}\end{equation}
 Moreover,
if $\grL/\grF$ is
Galois the map $\grM\mapsto q(\grM)$ gives  a one-to-one correspondence between graded subfields~
$\grM$ with
${\grF\subseteq \grM\subseteq \grL}$ and subfields~$M$ with
${q(\grF)\subseteq M\subseteq q(\grL)}$ which preserves degrees and
Galois groups  \cite[Proposition 5.1]{hwang-wadsworth:graded-fld}.
In addition, by passage to quotient fields, we have: if $\grL/\grF$ is normal and $\grM$ is a graded field with
$\grF\subseteq \grM\subseteq \grL$  then:
\begin{equation}\label{12}\grM/\grF\text{ is normal if and only if
$\sigma(\grM) = \grM$ for  every }\sigma\in \Aut(\grM/\grF).
\end{equation}



Consider commuting graded subfields  $\grK,\grL,$ of a graded division
algebra $\grD$ which each contain $\grF:=Z(\grD)$.
The {\it compositum} $\grK\cdot \grL$ is the $\grF$-algebra generated
by $\grK$ and $\grL$ in~$\grD$.
Picking  $\grF_0$-bases $\{k_i\}_{i\in I}$ and
$\{\ell_j\}_{j\in J}$ for $\grK$ and $\grL$, respectively,
which consist of homogenous elements, one has
$$
\grK\cdot\grL\,=\,
\textstyle\sum\limits_{i\in I, j\in J}
\grF_0k_i \ell_j,
$$
and hence $\grK\cdot \grL$ is a graded subfield of $\grD$ (since it is
finite-dimensional over $\grF$).
We say that $\grK$ and $\grL$ are {\it linearly disjoint} over
$\grE:=\grK\cap \grL$ if the natural surjection
$\grK\otimes_{\grE}\grL\ra \grK\cdot \grL$ is an isomorphism. In
particular, $\grK$~and~$\grL$ are linearly disjoint over $\grE$ if and
only if ${[\grK\cdot \grL:\grE]=[\grK:\grE][\grL:\grE]}$.

\begin{lem}\label{lem:linear-disjoint} Let $\grD$ be a graded division
algebra over $\grF$. Let $\grK$ and $\grL$ be commuting graded subfields
of $\grD$ containing $\grF$, with  $\grL$ Galois over~${\grK\cap \grL}$.
Then $\grK$ and $\grL$ are linearly disjoint over $\grK\cap \grL$.
\end{lem}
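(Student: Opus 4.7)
\emph{Proof plan.} The plan is to reduce the lemma to the classical Galois linear disjointness theorem via the quotient fields, exploiting the equivalences recorded in Section \ref{graded-fld.sec} between graded field extensions and their quotient field extensions.

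Set $\grE := \grK \cap \grL$; this is a graded subfield, since any nonzero homogeneous element is invertible in both graded fields $\grK$ and $\grL$, hence in their intersection. As $\grK \cdot \grL$ is a graded field, we may form $q(\grK\cdot\grL) = q(\grK)\cdot q(\grL)$, and by the table (\ref{table.equ}) the hypothesis that $\grL/\grE$ is Galois gives that $q(\grL)/q(\grE)$ is Galois. The classical field-theoretic Galois disjointness then yields
\[
  [q(\grK)\cdot q(\grL) : q(\grK)] \;=\; [q(\grL) : q(\grK)\cap q(\grL)].
\]
Since passage to quotient fields preserves dimensions of graded field extensions, this identity will deliver $[\grK\cdot\grL : \grK] = [\grL : \grE]$---i.e., the linear disjointness $[\grK\cdot\grL : \grE] = [\grK:\grE][\grL:\grE]$---provided $q(\grK)\cap q(\grL) = q(\grE)$.

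The heart of the argument is therefore the intersection identity. The inclusion $q(\grE) \subseteq q(\grK) \cap q(\grL)$ is trivial. For the reverse, by the graded-ungraded subfield correspondence for the Galois extension $q(\grL)/q(\grE)$ (\cite[Prop.~5.1]{hwang-wadsworth:graded-fld}), there is a unique graded subfield $\grM$ with $\grE \subseteq \grM \subseteq \grL$ satisfying $q(\grM) = q(\grK) \cap q(\grL)$, and it suffices to show $\grM \subseteq \grK$. Given a homogeneous $m \in \grM$ of degree $\gamma$, the inclusion $m \in q(\grK)$ lets us write $m = a/b$ with $a, b \in \grK$; decomposing $a, b$ into homogeneous components and matching degrees in the identity $mb = a$ produces homogeneous $a', b' \in \grK$ with $m = a'/b'$. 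Because $\grK$ is a graded field, $b'^{-1} \in \grK$, so $m = a'\, b'^{-1} \in \grK$. Since $\grM$ is spanned by its homogeneous elements, $\grM \subseteq \grK$, and hence $\grM \subseteq \grE$, as needed.

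The main point requiring care is this last conversion of a generic quotient representation $m = a/b$ into a homogeneous one; everything else is formal transfer through the quotient field equivalences of Section \ref{graded-fld.sec}.
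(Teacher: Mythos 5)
Your proof is correct and follows the same overall strategy as the paper's: pass to quotient fields, apply the classical Galois linear-disjointness theorem to $q(\grL)/q(\grE)$, and carry the resulting degree equality $[q(\grK)\cdot q(\grL):q(\grK)] = [q(\grL):q(\grE)]$ back to $[\grK\cdot\grL:\grK]=[\grL:\grE]$. The one step you handle by a genuinely different route is the intersection identity $q(\grK)\cap q(\grL) = q(\grE)$, which you correctly isolate as the heart of the transfer. The paper disposes of it in a single line: since $q(\grK)=\grK\otimes_\grF q(\grF)$ (and likewise for $\grL$, $\grE$), and since tensoring over $\grF$ with the localization $q(\grF)$ is flat and hence commutes with the intersection of the $\grF$-submodules $\grK$ and $\grL$ of $\grK\cdot\grL$, one gets $q(\grE)=(\grK\otimes_\grF q(\grF))\cap(\grL\otimes_\grF q(\grF))=q(\grK)\cap q(\grL)$. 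You instead invoke the graded Galois subfield correspondence \cite[Prop.~5.1]{hwang-wadsworth:graded-fld} for $\grL/\grE$ to produce the graded $\grM\subseteq\grL$ with $q(\grM)=q(\grK)\cap q(\grL)$, and then show $\grM\subseteq\grK$ by degree-matching in $mb=a$. That degree-matching step is sound: for any $\epsilon$ in the support of $b$ one gets $m\,b_\epsilon = a_{\gamma+\epsilon}$ inside the graded field $\grK\cdot\grL$, and $b_\epsilon$ is a nonzero homogeneous element of $\grK$, hence invertible in $\grK$, giving $m\in\grK$ and so $\grM\subseteq\grK\cap\grL=\grE$. Your route is more hands-on and avoids the flat-base-change observation, at the cost of using the Galois hypothesis twice (once for the subfield correspondence, once for linear disjointness); note that the paper's intersection identity actually holds with no Galois hypothesis at all, which is a small structural advantage of its argument. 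Both proofs are valid; the paper's is shorter.
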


\begin{proof}

Let $\grE:=\grK\cap \grL$. Then,
$$
q(\grE) \, = \, (\grK\cap \grL) \otimes_\grF q(\grF)
\, = \, \big(\grK \otimes_\grF q(\grF)\big) \cap
\big(\grL \otimes_\grF q(\grF)\big) \, = \,
q(\grK) \cap q(\grL).
$$
Moreover, since $q(\grK) \cdot q(\grL)$ is generated over
$q(\grF)$ by $\grK$ and $\grL$, we have
$q(\grK)\cdot q(\grL)=q(\grK\cdot \grL)$.  As $q(\grL)/q(\grE)$ is
Galois,  $q(\grK)$ and $q(\grL)$ are linearly disjoint over $q(\grE)$;
hence,
$$
[\grK\cdot\grL:\grK]\,=\,[q(\grK)\cdot q(\grL):q(\grK)]
\,=\,[q(\grL):q(\grE)]\,=\,[\grL:\grE].
$$
Therefore, $\grK$ and $\grL$ are linearly disjoint over $\grE$.
\end{proof}







We shall also need the following lemma concerning totally ramified extensions:



\begin{lem}\label{tot-ramified.lem}
Let $\grF\subseteq\grK\subseteq\grM$ be graded fields such that
$\grM/\grK$ is totally ramified. If $\grM/\grF$ is normal
$($resp.~ Galois$)$ then $\grK/\grF$ is normal $($resp.~ Galois$)$.
\end{lem}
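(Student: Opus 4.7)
The plan is to deduce both statements from Lemma~\ref{lem:tot-ram1}(i), which in our setup forces $\grK$ to consist of all homogeneous components of $\grM$ of degree lying in $\Gamma_\grK$. The normal case follows from the fact that the minimal polynomial of a homogeneous element has only homogeneous roots of the same degree; the Galois case then reduces to separability via passage to quotient fields.

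\textbf{Step 1: A structural description of $\grK$ inside $\grM$.} Applying Lemma~\ref{lem:tot-ram1}(i) with the ambient graded division ring equal to $\grM$ and with $\grB = \grK$, the hypothesis that $\grM/\grK$ is totally ramified yields
\[
\grK \;=\; \bigoplus_{\gamma\in\Gamma_\grK}\grM_\gamma.
\]
In particular, any homogeneous element $x\in\grM$ whose degree lies in $\Gamma_\grK$ automatically belongs to~$\grK$.

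\textbf{Step 2: The normal case.} Let $c\in\grK$ be an arbitrary homogeneous element, say of degree $\gamma$, so that $\gamma\in\Gamma_\grK$. By the properties of minimal polynomials in graded field extensions recalled in \S\ref{graded-fld.sec}, the minimal polynomial $m_{q(\grF),c}$ has homogeneous coefficients in $\grF$, and every root of it (in any graded field extension of $\grF$) is homogeneous of the same degree $\gamma$ as $c$. Since $\grM/\grF$ is normal, $m_{q(\grF),c}$ splits over $\grM$; each of its roots is therefore in $\grM_\gamma$, and by Step~1 actually in $\grK_\gamma\subseteq\grK$. Hence $m_{q(\grF),c}$ splits over $\grK$ for every homogeneous $c\in\grK$, proving that $\grK/\grF$ is normal.

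\textbf{Step 3: The Galois case.} Assume in addition that $\grM/\grF$ is Galois. By the equivalences in~\eqref{table.equ}, $q(\grM)/q(\grF)$ is Galois, in particular separable; hence the intermediate extension $q(\grK)/q(\grF)$ is separable as well. Combining this with the normality of $\grK/\grF$ established in Step~2, together with the normal line of~\eqref{table.equ}, we see that $q(\grK)/q(\grF)$ is both normal and separable, i.e.\ Galois. A second application of~\eqref{table.equ} then gives that $\grK/\grF$ is Galois.

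The only subtle point is Step~1: one might initially hope to argue by restricting automorphisms in $\Aut(\grM/\grF)$ to $\grK$ via~\eqref{12}, but identifying $\sigma(\grK)=\grK$ directly from total ramification is not immediate without the decomposition of~$\grK$ afforded by Lemma~\ref{lem:tot-ram1}(i). Once that decomposition is in hand, the remaining arguments are essentially formal.
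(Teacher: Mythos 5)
Your proof is correct, and it rests on the same key observation as the paper: Lemma~\ref{lem:tot-ram1}(i) yields the decomposition $\grK=\bigoplus_{\gamma\in\Gamma_\grK}\grM_\gamma$, after which everything follows. The only real divergence is in the normal case. The paper observes that every $\sigma\in\Aut(\grM/\grF)$ is degree-preserving and therefore stabilizes $\grK$ (precisely because $\grK$ is a sum of full homogeneous components of $\grM$), and then invokes the automorphism criterion~\eqref{12} for normality; you instead argue directly that for homogeneous $c\in\grK$ of degree $\gamma$, the roots of $m_{q(\grF),c}$ lie in $\grM_\gamma\subseteq\grK$, so the minimal polynomial splits over $\grK$. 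Both arguments are correct and of comparable length; yours unpacks~\eqref{12} rather than citing it, which makes the proof a bit more self-contained, while the paper's route is the one your closing remark hints at and, contrary to the slight worry you raise there, works perfectly well once the decomposition from Lemma~\ref{lem:tot-ram1}(i) is in hand. For the Galois case you and the paper do essentially the same thing: pass to quotient fields and combine normality with separability (the paper phrases the latter as preservation of tameness via~\eqref{table.equ}).
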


\begin{proof}
Since $\grM/\grK$ is totally ramified, by Lemma \ref{lem:tot-ram1}
we have ${\grK \, = \, \bigoplus_{\gamma\in \Gamma_{\grK}} \grM_\gamma}$.
As each $\sigma \in \Gal(\grM/\grF)$ is degree-preserving, this
shows that $\sigma(\grK) = \grK$.
If $\grM$ is normal over $\grF$ then by  \eqref{12}, $\grK$ is also
normal over~$\grF$.
If $\grM/\grF$ is tame then by \eqref{table.equ},  $\grK/\grF$~is tame.
Both statements are proved.
\end{proof}








\label{2}

Finally, we  note that for any finite graded field extension
$\grL/\grF$, if $c \in \grL_0$, then since the roots of its
minimal polynomial $m_{q(\grF),c}$ all have degree $0$, the
polynomial has coefficients in $\grF_0$; so,
$m_{q(\grF),c}=m_{\grF_0,c}$.  Hence,
\begin{equation}\label{0-comp-normal}
\text{ if $\grL/\grF$  is normal (resp.~ Galois) then
$\grL_0/\grF_0$ is normal (resp.~ Galois). }
\end{equation}









\subsection{Canonical subalgebras of graded division algebras}
\label{8}\label{6} \label{5} \label{9}\label{19} The following canonical
algebras and their properties were introduced in
\cite{hwang-wadsworth:graded}. Let $\grD$ be a graded division algebra
over its center~$\grF$.
Then $\grD$ has the following canonical subalgebras:
$$\begin{array}{lll}
\grU & = & \grD_0\otimes_{\grF_0}\grF \, =\, \text{the maximal  subalgebra of }\grD\mbox{ unramified over }\grF, \\
\grZ & = & Z(\grD_0)\otimes_{\grF_0}\grF\, =\, \text{the center of }\grU,  \\
\grC & = & \text{the centralizer }C_\grD(\grU), \\
\grE & = & \text{the centralizer }C_\grD(\grZ)\, =\, \grU\otimes_\grZ \grC.  \\
\end{array}$$

$$\xymatrix{
 & \grD \ar@{-}[d] & \\
 & \grE = \grU\otimes_\grZ \grC \ar@{-}[dr] \ar@{-}[dl] & \\
 \grU \ar@{-}[dr] & & \grC \ar@{-}[dl] \\
 & \grZ \ar@{-}[d] & \\
 & \grF &
}$$

Note that $\grU$ and hence $\grZ$, $\grC$, and $\grE$, were chosen
canonically and hence are invariant under conjugation by nonzero
elements of $\grD_\gamma$, ${\gamma\in \Gamma_\grD}$.
Moreover,  as $\grD$ is totally ramified over $\grU$, one has
$\grU_0=\grE_0=\grD_0$.
In particular, as $[\grD:\grE] = [\grZ: \grF]$ by the graded double
centralizer theorem, we have
\begin{equation}\label{canonical-properties.equ} [\grD:\grE] =
|\Gamma_\grD:\Gamma_\grE| = [\grZ:\grF] = [\grZ_0:\grF_0].
\end{equation}

The center  $Z(\grD_0)= \grZ_0$ clearly contains $\grF_0$ but is not
necessarily equal to it.
In fact, $\grZ_0/\grF_0$ is  Galois with abelian Galois group which is
described as follows:
Let $\int(d_\gamma)$ denote the inner automorphism which sends
$x\in \grD$ to $d_\gamma x d_\gamma^{-1}$.
Let $\theta_\grD\colon\Gamma_\grD\ra \Gal(\grZ_0/\grF_0)$ be the homomorphism
for which $\theta_\grD(\gamma)$ is the restriction of $\int(d_\gamma)$
to $\grZ_0$ for any nonzero $d_\gamma\in \grD_\gamma$. Then
$\theta_\grD$ is well defined, surjective, and its kernel is
$\Gamma_E$, \cite[Proposition 2.3]{hwang-wadsworth:graded}. Hence
$\Gal(\grZ_0/\grF_0)\cong \Gamma_\grD/\Gamma_\grE$.
Note also that as $\grC_0 = C_{\grD_0}(\grD_0) = \grZ_0$, the
graded algebra $\grC$ is totally ramified over $\grZ$.












We shall need the following properties of maximal graded subfields of
$\grC$:

\begin{lem}\label{ramified.lem} Let $\grT$ be a maximal graded subfield
of $\grC$. Then:
\begin{enumerate}
\item  $\grT$  is Galois over $\grF$;
\item   $\Gamma_\grT=\Gamma_{C_\grE(\grT)}$
\end{enumerate}
\end{lem}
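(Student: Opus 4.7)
The plan is to establish (ii) first by a double-centralizer computation, and then to deduce (i) by exhibiting sufficiently many $\grF$-automorphisms of $\grT$ via inner conjugation in $\grD$.

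A preliminary observation: $\grZ\subseteq\grT$. Indeed, $\grZ=Z(\grC)$, so $\grT\cdot\grZ$ is a commutative graded subring of $\grC$ containing $\grT$; maximality of $\grT$ forces $\grZ\subseteq\grT$. Consequently $\Gamma_\grZ=\Gamma_\grF\subseteq\Gamma_\grT$ and $\grT_0=\grZ_0=\grC_0$, so $\grC$ is totally ramified over $\grT$.

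For (ii), $\grU\cdot\grT\subseteq C_\grE(\grT)$ since $\grU$ centralizes $\grC\supseteq\grT$ and $\grT$ is commutative. The natural map $\grU\otimes_\grZ\grT\to\grE$ is injective because $\grU\otimes_\grZ\grT$ is a graded central simple algebra over the graded field $\grT$. Meanwhile, the graded double centralizer theorem in $\grE$ gives
\[
[C_\grE(\grT):\grZ]=[\grE:\grZ]/[\grT:\grZ]=[\grU:\grZ][\grC:\grZ]/[\grT:\grZ]=[\grU:\grZ][\grT:\grZ],
\]
using $\grE=\grU\otimes_\grZ\grC$ and $[\grC:\grZ]=[\grT:\grZ]^2$ by maximality of $\grT$ in $\grC$. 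Hence $C_\grE(\grT)=\grU\otimes_\grZ\grT$, whose grade group is $\Gamma_\grU+\Gamma_\grT=\Gamma_\grF+\Gamma_\grT=\Gamma_\grT$.

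For (i), Lemma~\ref{lem:tot-ram1}(i) applied to the totally ramified extension $\grC/\grT$ yields $\grT=\bigoplus_{\gamma\in\Gamma_\grT}\grC_\gamma$. Every graded $\grF$-automorphism of $\grC$ preserves each $\grC_\gamma$ and hence $\grT$; so for each $\gamma\in\Gamma_\grD$ and nonzero $d_\gamma\in\grD_\gamma$, the restriction $\int(d_\gamma)|_\grT$ lies in $\Aut(\grT/\grF)$. This restriction is independent of the choice of $d_\gamma$ (since $\grD_0\subseteq\grU$ centralizes $\grT$), so it defines a homomorphism $\psi\colon\Gamma_\grD\to\Aut(\grT/\grF)$. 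Its kernel is $\Gamma_{C_\grD(\grT)}$, and since $\grZ\subseteq\grT$ gives $C_\grD(\grT)\subseteq C_\grD(\grZ)=\grE$, this equals $\Gamma_{C_\grE(\grT)}=\Gamma_\grT$ by (ii). Using $\Gal(\grZ_0/\grF_0)\cong\Gamma_\grD/\Gamma_\grE$ and $\Gamma_\grE=\Gamma_\grC$ (from $\grE/\grU$ totally ramified with $[\grE:\grU]=[\grC:\grZ]$),
\[
|\Gamma_\grD/\Gamma_\grT|=|\Gamma_\grD/\Gamma_\grE|\cdot|\Gamma_\grC/\Gamma_\grT|=[\grZ:\grF]\cdot[\grT:\grZ]=[\grT:\grF].
\]
Combined with the general bound $|\Aut(\grT/\grF)|\leq[\grT:\grF]$ (via passage to quotient fields), this forces equality and thus $\grT/\grF$ Galois.

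The main point requiring care is the injection $\grU\otimes_\grZ\grT\hookrightarrow\grE$, i.e., the simplicity of $\grU\otimes_\grZ\grT$ as a graded algebra, which pins down $[\grU\cdot\grT:\grZ]$; the rest is a direct unwinding of the canonical structure combined with Lemma~\ref{lem:tot-ram1}.
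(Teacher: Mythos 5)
Your argument is correct, and part (i) is handled by a genuinely different (and arguably slicker) route than the paper's. For (ii) the two proofs are essentially the same: both identify $C_\grE(\grT)=\grU\otimes_\grZ\grT$ and read off $\Gamma_{C_\grE(\grT)}=\Gamma_\grU+\Gamma_\grT=\Gamma_\grT$; the paper invokes the tensor-product form of the graded double centralizer theorem to get $C_\grE(\grT)=C_\grU(\grZ)\otimes_\grZ C_\grC(\grT)$ directly, whereas you get there by an inclusion plus a dimension count plus graded simplicity of $\grU\otimes_\grZ\grT$ --- the same conclusion, with comparable effort. For (i) the paths diverge: the paper first shows $\grT/\grZ$ is Galois by noting $\grT/\grZ$ is totally ramified and tame (citing $\charak\grF_0\nmid|\Gamma_\grC:\Gamma_\grZ|$), hence $q(\grT)/q(\grZ)$ is Kummer; then it extends each $\sigma\in\Gal(\grZ/\grF)$ to a graded automorphism of $\grT$ via $\int(d)$ and concludes by a tower argument. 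You instead build the automorphisms all at once: you define $\psi\colon\Gamma_\grD\to\Aut(\grT/\grF)$ by $\gamma\mapsto\int(d_\gamma)|_\grT$ (well-defined because $\grD_0\subseteq\grU$ centralizes $\grT$, and preserving $\grT$ by Lemma~\ref{lem:tot-ram1}(i) applied to $\grC/\grT$), compute $\ker\psi=\Gamma_{C_\grD(\grT)}=\Gamma_{C_\grE(\grT)}=\Gamma_\grT$ using (ii), show $|\Gamma_\grD/\Gamma_\grT|=[\grZ:\grF][\grT:\grZ]=[\grT:\grF]$, and then invoke Artin's theorem through $q(\grT)/q(\grF)$. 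Your route avoids the Kummer-theoretic step and the explicit appeal to the tameness of $\grC/\grZ$, and it yields for free that $\Gal(\grT/\grF)$ is abelian (being a quotient of $\Gamma_\grD$); the paper's route makes the intermediate fact that $\grT/\grZ$ is Kummer visible, which is useful context but not needed for the statement. Both are complete and correct.
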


\begin{proof}
By the graded double centralizer theorem
\cite[Proposition 1.5]{hwang-wadsworth:graded} and dimension count,
$$
C_\grE(\grT)\,=\,C_\grU(\grZ)\otimes_\grZ C_\grC(\grT)\,=\,
\grU\otimes_\grZ \grT.
$$
As $\Gamma_\grU=\Gamma_\grZ\subseteq \Gamma_\grT$, we get
$\Gamma_{C_\grE(\grT)}=\Gamma_\grT$, 
proving (ii).

To show (i), we first claim that $\grT/\grZ$ is Galois.
Since $\grC$ is totally ramified over~$\grZ$, its graded subfield
$\grT$ is also totally ramified over $\grZ$. 
Hence, by Lemma~\ref{lem:tot-ram1}
\begin{equation}\label{eq:trsubfield}
\grT \, = \, \textstyle \bigoplus\limits_{\gamma\in \Gamma_\grT}
\grC_\gamma.
\end{equation}
\label{4}
By \cite[Proposition 2.3]{hwang-wadsworth:graded}, we have
$\charak\grF_0\ndivides |\Gamma_\grC:\Gamma_\grZ|$, and hence
${\charak\grF_0\ndivides |\Gamma_\grT:~\Gamma_\grZ|}$. This shows that
$\grT$ is tame over $\grZ$. As $\grT$ is also totally ramified over~
$\grZ$, and $\charak\grF_0\ndivides |\Gamma_T:\Gamma_Z|$,
\cite[Proposition 3.3]{hwang-wadsworth:graded-fld}  implies that
$q(\grT)/q(\grZ)$ is a Kummer extension, hence Galois. Thus, $\grT/\grZ$
 is Galois (see \eqref{table.equ}), proving the claim.



As $\grZ$ is Galois and unramified over $\grF$, we have
$\Gal(\grZ/\grF) \cong
\Gal(\grZ_0/\grF_0)$.
Let $\sigma \in \Gal (\grZ/\grF)$.  Since $\theta_\grD$ is
surjective, there is a unit $d\in \grD_\gamma$ for some
$\gamma\in \Gamma$, such that  ${\int(d)|_{\grZ_0}
= \sigma|_{Z_0}}$, and hence  $\int(d)|_\grZ = \sigma$.
Since $d$ lies in one of the components $\grD_\gamma$,  $\int(d)$
preserves $\grC$.
Thus, \eqref{eq:trsubfield} shows that the graded
automorphism $\int(d)$ preserves $\grT$.
Since $\grZ/\grF$ and $\grT/\grZ$ are Galois and since
every automorphism in $\Gal(\grZ/\grF)$ extends to a graded
automorphism of~$\grT$,
it follows that $\grT$ is Galois over $\grF$, as required.
\end{proof}

\subsection{Tame division algebras}
All division algebras considered in this paper are assumed to be
finite-dimensional.
Let $F$ be a Henselian field and $D$ a division algebra with center $F$,
 $\oline D$ the residue division algebra of $D$ with respect to
the unique extension of the valuation on $F$ to $D$, and let
 $\Gamma_D$ be the value group (a totally ordered abelian group).

Recall that $D$ is {\it tame}
(or {\it tamely ramified} over~$F$) if and only if
\begin{equation}\label{defectless}
[D:F]\,=\,[\oline D:\oline F] \,|\Gamma_D:\Gamma_F|,
\end{equation}
$Z(\oline D)$ is separable over $\oline F$, and
${\charak \oline F\ndivides |\ker(\theta_D):\Gamma_F|,}$
cf.~\cite[\S 6]{jacob-wadsworth:div-alg-hensel-field}. Furthermore, $D$~
is said to be  {\it inertial} over $F$ if it is tame and
also unramified, i.e. $\Gamma_D=\Gamma_F$.

The {\it tame Brauer group} $\TBr(F)$ is the subgroup of $\Br(F)$
which consists of classes $[D]$ of  tame division algebras $D$ with center $F$.  Since $D$ is tame
if and only if $D$ is split by the maximal tamely ramified
field extension of $F$, $\TBr(F)$ is a subgroup of $\Br(F)$. Denote the  degree of~$D$ by ${\deg D:=\sqrt{[D:F]}}$,  write $\ind [D]$ for the Schur index $\ind [D]:=\deg D$, and let $[D]^Z=[D\otimes_F Z]\in \Br(Z)$ for any extension $Z/F$.

We shall  need the following lemma from
\cite{jacob-wadsworth:div-alg-hensel-field} which describes properties
that are preserved under tensor products with inertial  algebras.


\begin{lem}\label{JW.lem}
Let $I, D$ be central division algebras over~$F$. Assume $I$ is
inertial and $D$ is tame. Let $D'$ be the division algebra underlying
$[I\otimes_F D]$. Then $Z(\oline{D'})\cong Z(\oline D)$,
$\Gamma_{D'} = \Gamma_{D}$,
$[\oline{D'}]=[\oline{I}\otimes_\oline{F}\oline D]$ in $\Br(Z(\oline D))$,
 and the following ratio is preserved:
\begin{equation}\label{ratio.equ}
\textstyle
\frac{\deg D}{\deg \oline D}  \,=  \,
\frac{\deg D'}{\deg\oline{D'}}.
\end{equation}



\end{lem}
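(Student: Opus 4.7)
The plan is to reduce the assertion to the associated graded level. Since $D$ is tame and $I$ is inertial (hence tame), so is $D'$, because $\TBr(F)$ is a subgroup of $\Br(F)$. Passing to the graded field $\gr(F)$, the inertiality of $I$ yields $\gr(I)\cong \oline I \otimes_{\oline F}\gr(F)$, concentrated (with respect to ramification) in degree~$0$: $\Gamma_{\gr(I)}=\Gamma_F$ and $\gr(I)_0=\oline I$. Since the passage to $\gr$ is multiplicative on tame Brauer classes and sends tensor products over $F$ to tensor products over $\gr(F)$,
\[
\gr(D') \,\sim\, \gr(I)\otimes_{\gr(F)} \gr(D) \,\cong\, \oline I \otimes_{\oline F}\gr(D) \,=:\, \grB,
\]
as graded central simple $\gr(F)$-algebras.

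The graded algebra $\grB$ has $\Gamma_\grB = \Gamma_{\gr(D)} = \Gamma_D$, and its degree-$0$ component is $\grB_0 = \oline I \otimes_{\oline F}\oline D$, which is central simple over $Z(\oline D)$ (using that $\oline I$ is central simple over $\oline F$ by inertiality). The next step is to pass from $\grB$ to the underlying graded division algebra $\gr(D')$: by the graded Wedderburn theorem, $\grB \cong M_n(\gr(D'))$ for some $n$ with an appropriate homogeneous shift, and such a graded matrix algebra has the same support as $\gr(D')$ and a degree-$0$ part that is Brauer-equivalent over the residue center to $\gr(D')_0 = \oline{D'}$. Therefore,
\[
\Gamma_{D'}\,=\,\Gamma_{\gr(D')}\,=\,\Gamma_\grB\,=\,\Gamma_D,\qquad Z(\oline{D'})\,=\,Z(\grB_0)\,=\,Z(\oline D),
\]
and $[\oline{D'}] = [\oline I \otimes_{\oline F}\oline D]$ in $\Br(Z(\oline D))$.

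For the final ratio identity, I would expand the fundamental equality \eqref{defectless} as $[D:F]=[\oline D:Z(\oline D)]\cdot[Z(\oline D):\oline F]\cdot|\Gamma_D:\Gamma_F|$, giving
\[
\frac{\deg D}{\deg \oline D} \,=\, \sqrt{[Z(\oline D):\oline F]\,|\Gamma_D:\Gamma_F|}.
\]
Since $Z(\oline{D'})=Z(\oline D)$ and $\Gamma_{D'}=\Gamma_D$, the same identity holds with $D$ replaced by $D'$, which gives the claimed ratio preservation.

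The main obstacle is the second step: simultaneously tracking the value group and the Brauer class of the degree-$0$ component under the passage from the graded central simple algebra $\grB$ (generally not a graded division ring) to its underlying graded division algebra $\gr(D')$. This relies on the graded analog of Wedderburn decomposition and on the compatibility of the functor $\gr$ with tensor products and Brauer equivalence for tame algebras, which is a cornerstone of the graded approach of \cite{hwang-wadsworth:graded}.
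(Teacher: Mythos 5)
Your route is genuinely different from the paper's, which simply invokes Corollary~6.8 of \cite{jacob-wadsworth:div-alg-hensel-field} for the first three assertions and then only derives the ratio identity; you instead pass to the graded side and try to reprove everything there. Your concluding ratio computation from $\bigl(\deg D/\deg\oline D\bigr)^2=[Z(\oline D):\oline F]\,|\Gamma_D:\Gamma_F|$ is correct and is in substance the paper's $\theta_D$-argument unwound, needing only the first two assertions rather than $\theta_{D'}=\theta_D$.

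The step you yourself flag as the main obstacle is, however, a genuine gap, and the phrasing you use to paper over it is false as a general statement. A graded matrix algebra $M_n(\grD')$ with a homogeneous shift need \emph{not} have the same support as $\grD'$, nor a degree-$0$ part Brauer-equivalent to $\grD'_0$: take $\Gamma=\tfrac12\Z$, a graded field $\grF$ with $\Gamma_\grF=\Z$, and shift $(0,\tfrac12)$; then the graded CSA $M_2(\grF)(0,\tfrac12)$ has support all of $\tfrac12\Z\supsetneq\Z=\Gamma_\grF$ and degree-$0$ component $\grF_0\times\grF_0$, which is not even simple. So graded Wedderburn alone does not give $\Gamma_{\gr(D')}=\Gamma_\grB$ or $[\gr(D')_0]=[\grB_0]$. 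What rescues the argument in your particular situation is that $\grB=\oline I\otimes_{\oline F}\gr(D)$ contains $\gr(D)$ as a graded division subalgebra with $\Gamma_{\gr(D)}=\Gamma_\grB$. One can then split $\grB$ along the degree-$0$ matrix units of $\grB_0\cong M_k(\oline E)$ (with $\oline E$ the underlying division algebra of $\oline I\otimes_{\oline F}\oline D$) to get $\grB\cong M_k(\grB')$ with $\grB'=e_{11}\grB e_{11}$, so $\grB'_0=\oline E$ and $\Gamma_{\grB'}=\Gamma_\grB$; and then, for each $\gamma\in\Gamma_\grB$, take a homogeneous unit $u\in\gr(D)_\gamma$ and conjugate by a suitable element of $\grB_0^\times$ so that the result commutes with $e_{11}$, thereby producing a homogeneous unit of $\grB'$ in degree~$\gamma$. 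This shows $\grB'$ is a graded division algebra, hence $\grB'\cong\gr(D')$, with the claimed support and degree-$0$ Brauer class. In short, your approach can be made to work, but the missing step is not a routine ``cornerstone of the graded approach''---it is a concrete lemma about graded CSAs containing a full-support graded division subalgebra, and it must be supplied (or appropriately referenced) where the paper has a single citation.
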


\begin{proof}
All assertions are proved in
\cite[Corollary 6.8]{jacob-wadsworth:div-alg-hensel-field}, except for
the last, which is derived as follows.
Recall that there is a well-defined homomorphism $${\theta_D \colon
\Gamma_D \to\Gal(Z(\oline D)/\oline F)}$$ given as
follows\footnote{This definition slightly differs from the definition
of $\theta_D$ in \cite[p.~133]{jacob-wadsworth:div-alg-hensel-field}, where
$\theta_D$~is defined on $\Gamma_D/\Gamma_F$.}. Let $v$ be the valuation on $D$.
For $\gamma\in \Gamma_D$ take any nonzero  $c\in D$ with
$v(c) = \gamma$. Then, for any $z\in D$ with $v(z) \ge 0$
and $\oline z \in Z(\oline D)$, define
$\theta_D(\gamma)(\oline z) = \oline{c z c^{-1}}$.
By \cite[Corollary 6.8]{jacob-wadsworth:div-alg-hensel-field}, $\theta_{D'}=\theta_{D}$.
Since $D$ is tame, by \eqref{defectless}
$$
\begin{array}{lll}
[D:F]&=&
[\oline D:\oline F]\cdot |\Gamma_D:\Gamma_F| \\
&=& [\oline D:Z(\oline D)]\,[Z(\oline D):\oline F]\,
|\ker\theta_D:\Gamma_F|\,|\image\theta_D|.\end{array}
$$
By \cite[Prop.~1.7]{jacob-wadsworth:div-alg-hensel-field},  $[Z(\oline D):\oline F]=|\image\theta_D|$, hence
by taking square roots we obtain:
\begin{equation}\label{theta.equ}
\deg D\,=\, \deg \oline D \cdot
|\image\theta_D|\cdot \sqrt{|\ker\theta_D:\Gamma_F|}.
\end{equation}
Thus,
$$\textstyle
\frac{\deg D}{\deg \oline D}
\,=\,|\image\theta_D |\cdot \sqrt{|\ker\theta_D:\Gamma_F|}
\,=\,|\image\theta_{D'} |\cdot \sqrt{|\ker\theta_{D'}:\Gamma_F|}
\,=\, \frac{\deg D'}{\deg \oline {D'}}.
$$
\end{proof}

\subsection{The correspondence}

\label{17}


A tame division algebra $D$ with value group $\Gamma$ yields a $\Gamma$-graded division ring
$\gr(D)$ with components $\gr(D)_\gamma=D_{\geq \gamma}/D_{> \gamma},\gamma\in \Gamma,$ where
$$
D_{\geq\gamma} \,=\, \{ x\in D\,|\, v(x)\geq \gamma\}
\text{ and }D_{>\gamma} \,=\, \{ x\in D\,|\, v(x)>\gamma\} .
$$
Furthermore, $\gr(D)$ is a graded division algebra over $\gr(F)$ with
${\gr(D)_0 = \overline D}$, and $\Gamma_{\gr(D)}=\Gamma$. 
Thus, \eqref{defectless.equ} and \eqref{defectless} together show that
\begin{equation}\label{nodefect}
[\gr(D):\gr(F)] \,=\, [D:F].
\end{equation}
Also, the tameness of $D$ implies that ${Z(\gr(D)) = \gr(F)}$ by
\cite[Proposition~4.3]{hwang-wadsworth:graded}.

The map $D\mapsto \gr(D)$ gives a degree-preserving bijection
\cite[Theorem 5.1]{hwang-wadsworth:graded} between tame
division algebras with center $F$ (up to isomorphism) and graded division
algebras with center  $\gr(F)$ (up to isomorphism).
By \cite[Corollary 5.7]{hwang-wadsworth:graded}, this correspondence
is functorial under field extensions $L/F$;  hence, $L$~is a maximal
subfield of $D$ if and only if $\gr(L)$~is a maximal graded subfield
of $\gr(D)$. By \cite[Theorem~1.5]{mounirh-wadsworth:semiramifed}, if
$L/F$ is normal then so is $\gr(L)/\gr(F)$.

On the level of fields,  by
\cite[Theorem 5.2]{hwang-wadsworth:graded-fld}, there is a
correspondence between tame graded field extensions of $\gr(F)$ and tame
field extensions of $F$, which preserves degrees and Galois groups.
In particular, to every tame graded field extension $\grL$ of~$\gr(F)$
there corresponds  a unique tamely ramified
 field extension $L$ of $F$, called
the {\it tame lift} of $\grL$ over~$F$,
such that $\gr(L)\cong \grL$ as graded fields
and ${[L:F]} = {[\grL:\gr(F)]}$.  Moreover,
$L$ is Galois over $F$ if and only if $\grL$ is Galois over over
$\gr(F)$.


\section{Maximal subfields of tame graded division algebras}

Throughout this section we fix a graded division algebra $\grD$ with
center $\grF$, and let $\grZ,\grC,\grU$, and $\grE$ be its canonical
subalgebras (introduced in \S\ref{5}). We first prove the graded
version of Theorem \ref{main.thm}:

\begin{thm}\label{graded-main.thm}
A finite-dimensional graded division algebra $\grD$ has a graded
maximal subfield Galois $($resp.\ normal$)$ over $\grF$ if and only if
$\grD_0$ has a maximal subfield Galois $($resp.\ normal$)$ over $\grF_0$.
\end{thm}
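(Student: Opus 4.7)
My plan is to prove the two directions of the equivalence separately, both leveraging the canonical decomposition of $\grD$ via the subalgebras $\grU$, $\grZ$, $\grC$, $\grE$.

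For the ``if'' direction I would give an explicit compositum construction. Let $N$ be a maximal subfield of $\grD_0$ Galois over $\grF_0$. Since $N$ is maximal in $\grD_0$ it must contain $Z(\grD_0)=\grZ_0$, so the lift $\grN:=N\otimes_{\grF_0}\grF$ is a graded subfield of $\grU$ containing $\grZ$, Galois over $\grF$. I would then pick any maximal graded subfield $\grT$ of $\grC$; by Lemma~\ref{ramified.lem}(i) it is Galois over $\grF$, and by construction it is totally ramified over $\grZ$. Since $\grN\subseteq\grU$ and $\grT\subseteq\grC=C_\grD(\grU)$, the two commute, and I would check $\grN\cap\grT=\grZ$: the intersection lies in $\grN$ (so has grade group $\Gamma_\grF=\Gamma_\grZ$) and in $\grT$ (so has $0$-component $\grZ_0$), hence equals $\grZ$. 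Lemma~\ref{lem:linear-disjoint} applied with $\grT/\grZ$ Galois then shows $\grN$ and $\grT$ are linearly disjoint over $\grZ$, giving
\[
[\grN\cdot\grT:\grF]\,=\,[\grN:\grZ][\grT:\grZ][\grZ:\grF]\,=\,\deg\grD_0\cdot\deg\grC\cdot[\grZ_0:\grF_0]\,=\,\deg\grD,
\]
where the last equality follows from $\grE=\grU\otimes_\grZ\grC$ and \eqref{canonical-properties.equ}. Thus $\grM':=\grN\cdot\grT$ is a maximal graded subfield of $\grD$, and as the compositum of two Galois extensions of $\grF$ it is Galois over $\grF$. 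The normal case runs identically, replacing ``Galois'' by ``normal''.

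For the ``only if'' direction, let $\grM$ be a maximal graded subfield of $\grD$ Galois over $\grF$. By \eqref{0-comp-normal}, $\grM_0$ is Galois over $\grF_0$, but it need not be maximal in $\grD_0$ because $|\Gamma_\grM:\Gamma_\grF|$ can differ from $\deg\grC$. My plan is to replace $\grM$ by a new maximal graded Galois subfield $\grM''$ whose $0$-component \emph{is} maximal in $\grD_0$. The key idea is to exploit $\grE=\grU\otimes_\grZ\grC$: for a maximal graded subfield $\grT\subseteq\grC$ (Galois over $\grF$ by Lemma~\ref{ramified.lem}(i)), a dimension count gives $C_\grD(\grT)=\grU\otimes_\grZ\grT=\grD_0\otimes_{\grZ_0}\grT$. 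Any maximal graded subfield of this centralizer containing $\grT$ must have grade group $\Gamma_\grT$, so by \eqref{defectless.equ} its $0$-component is maximal in $\grD_0$; moreover it decomposes canonically as $\grN\cdot\grT$ with $\grN\subseteq\grU$ a maximal graded subfield, since the unramified and totally ramified factors of $\grU\otimes_\grZ\grT$ decouple.

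The main obstacle is then producing such an $\grM''$ that is Galois over $\grF$, starting from the given $\grM$. My plan is to invoke the graded Skolem--Noether theorem: each $\sigma\in\Gal(\grM/\grF)$ is the restriction of $\int(d)$ for some homogeneous $d\in\grD$, and since $\grU$, $\grC$, $\grE$ are canonical they are all $\Gal(\grM/\grF)$-stable; in particular $\grM\cap\grU$ is Galois over $\grF$. The technical heart is to enlarge $\grM\cap\grU$ to a maximal graded Galois subfield $\grN$ of $\grU$, entwining it with $\grT$ to form $\grM''=\grN\cdot\grT$ as above, so that $N:=\grN_0$ is the desired maximal subfield of $\grD_0$ Galois over $\grF_0$. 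I expect this upgrade step will require combining $\grM\cap\grU$ with a carefully chosen Galois subextension (such as $\grZ$ or a conjugate of $\grT$) and a second application of Lemma~\ref{lem:linear-disjoint}; the normal case proceeds in parallel via \eqref{12}.
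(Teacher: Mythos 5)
Your ``if'' direction is essentially the paper's Proposition~\ref{prop:gradedmaxsubf}: lift $N$ to $\grN=N\otimes_{\grF_0}\grF\subseteq\grU$, take a maximal graded subfield $\grT\subseteq\grC$, and form $\grN\cdot\grT$. That part is fine.

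The ``only if'' direction has a genuine gap exactly where you flag the ``technical heart.'' You propose to enlarge $\grM\cap\grU$ to a maximal graded Galois subfield $\grN$ of $\grU$ and then take $\grN\cdot\grT$. But under the inertial correspondence $\grN\leftrightarrow\grN_0$, a maximal graded subfield of $\grU=\grD_0\otimes_{\grF_0}\grF$ Galois over $\grF$ is \emph{precisely} a maximal subfield of $\grD_0$ Galois over $\grF_0$ tensored up to $\grF$, so producing such an $\grN$ is the entire content of this direction of the theorem; the reduction is circular. Moreover $\grM\cap\grU=\grM_0\otimes_{\grF_0}\grF$ can be as small as $\grF$ itself: if $\grD=\grD_1\otimes_\grF\grD_2$ with $\grD_1$ inertial and $\grD_2$ totally ramified (so $\grZ=\grF$), a maximal graded subfield $\grM$ can have $\Gamma_\grM=\Gamma_\grD$, forcing $\grM_0=\grF_0$. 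Your proposed fixes don't close the gap: composing with $\grZ$ gains nothing here since $\grZ=\grF$, and a conjugate of $\grT$ lies in $\grC$, not in $\grU$, so it cannot be used to enlarge a subfield \emph{of} $\grU$.

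The paper's construction avoids this entirely and is the key idea you are missing. It does not try to build anything inside $\grU$. Instead it sets $\grM':=(\grM\cap C_\grD(\grT))\cdot\grT$, where $\grT$ is a maximal graded subfield of $\grC$, and the crux is the degree comparison of Lemma~\ref{lem:AB}: with $\grA=\grM$, $\grB=\grT$, one gets $[\grM:\grM\cap C_\grD(\grT)]\le[\grT:\grM\cap\grT]$ and that $\grM/(\grM\cap C_\grD(\grT))$ is totally ramified. Combined with linear disjointness (Lemma~\ref{lem:linear-disjoint}) this gives $[\grM':\grF]\ge[\grM:\grF]$, so $\grM'$ is maximal; since $\grM'\supseteq\grT$, Lemma~\ref{maximal-residue.lem} forces $\grM'_0$ to be maximal in $\grD_0$. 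The Galois/normal property of $\grM\cap C_\grD(\grT)$ then comes for free from Lemma~\ref{tot-ramified.lem} (totally ramified subextensions of normal extensions are normal), not from a Skolem--Noether argument, and $\grT$ is Galois over $\grF$ by Lemma~\ref{ramified.lem}. You should replace the ``enlarge $\grM\cap\grU$'' step by this intersection-with-$C_\grD(\grT)$ construction together with the $\Gamma$-group counting of Lemma~\ref{lem:AB}; without an inequality of that type there is no way to see that the modified field is still maximal.
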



The following Proposition gives the  ``if" implication  of
Theorem \ref{graded-main.thm}.

\begin{prop}\label{prop:gradedmaxsubf}
Let $M$ be a maximal subfield of $\grD_0$, $\grL=M\otimes_{\grF_0}\grF$,
and $\grT$ a maximal graded subfield of $\grC$. Then
$\grM:=\grL\cdot\grT$ is a maximal graded subfield of $\grD$.
Moreover, if $M/\grF_0$ is Galois $($resp.\ normal$)$ then $\grM/\grF$
is Galois $($resp.\ normal$)$.
\end{prop}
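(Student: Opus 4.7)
The plan is to build $\grM$ as the compositum $\grL\cdot\grT$ and verify maximality via linear disjointness (Lemma~\ref{lem:linear-disjoint}), leveraging that $\grL$ and $\grT$ sit in the mutually centralizing canonical subalgebras $\grU$ and $\grC$. The Galois/normal parts then reduce to the compositum property via the quotient-field correspondence~\eqref{table.equ}.

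First, by the construction recalled in \S\ref{1}, the $\grF_0$-subalgebra $M\subseteq\grD_0$ extends to the graded division $\grF$-subalgebra $\grL=M\otimes_{\grF_0}\grF\subseteq\grU$ with $\grL_0=M$ and $\Gamma_\grL=\Gamma_\grF$; commutativity of $M$ together with centrality of $\grF$ in $\grD$ makes $\grL$ a graded subfield. Since $\grT\subseteq\grC=C_\grD(\grU)$, the compositum $\grM:=\grL\cdot\grT$ is a commutative graded subfield. Next I would compute $\grL\cap\grT=\grZ$: the inclusion ``$\subseteq$'' follows from $\grU\cap\grC=Z(\grU)=\grZ$, while ``$\supseteq$'' holds because $M\supseteq Z(\grD_0)=\grZ_0$ forces $\grZ\subseteq\grL$, and $\grT$ contains the center $Z(\grC)=\grZ$. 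Lemma~\ref{ramified.lem} provides $\grT/\grF$ Galois, hence $\grT/\grZ$ is Galois, so Lemma~\ref{lem:linear-disjoint} applied to the commuting pair gives $[\grM:\grZ]=[\grL:\grZ]\cdot[\grT:\grZ]$.

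Maximality of $\grM$ now follows by counting: $[\grL:\grZ]=[M:\grZ_0]=\deg\grD_0$ since $M$ is maximal in $\grD_0$, and $[\grT:\grZ]=\deg\grC$ since $\grT$ is maximal in $\grC$, so $[\grM:\grF]=\deg\grD_0\cdot\deg\grC\cdot[\grZ_0:\grF_0]$. On the other hand, combining~\eqref{canonical-properties.equ} ($[\grD:\grE]=[\grZ_0:\grF_0]$) with $[\grE:\grZ]=[\grU:\grZ]\cdot[\grC:\grZ]=(\deg\grD_0\cdot\deg\grC)^2$ and $[\grZ:\grF]=[\grZ_0:\grF_0]$ gives $\deg\grD=\deg\grD_0\cdot\deg\grC\cdot[\grZ_0:\grF_0]$, so $[\grM:\grF]=\deg\grD$.

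Finally, if $M/\grF_0$ is Galois, every $\sigma\in\Gal(M/\grF_0)$ extends via $\sigma\otimes\mathrm{id}$ to a graded $\grF$-automorphism of $\grL$, so $|\Aut(\grL/\grF)|\geq[M:\grF_0]=[\grL:\grF]$, forcing $\grL/\grF$ Galois. Combined with $\grT/\grF$ Galois, correspondence~\eqref{table.equ} makes both $q(\grL)/q(\grF)$ and $q(\grT)/q(\grF)$ Galois, hence so is their compositum $q(\grM)=q(\grL)\cdot q(\grT)$, and~\eqref{table.equ} returns $\grM/\grF$ Galois. The subtler step is the normal case: I would verify $\grL/\grF$ is normal by writing any homogeneous $c\in\grL_\gamma$ as $c=mf$ with $m\in M$ and a fixed nonzero $f\in\grF_\gamma$, letting $g\in\grF_0[X]$ be the minimal polynomial of $m$ over $\grF_0$, and observing that $c$ is a root of $h(X):=f^{\deg g}g(X/f)\in\grF[X]$, which splits in $\grL$ because $g$ splits in $M$ by normality; thus $m_{q(\grF),c}$, dividing $h$, also splits in $\grL$. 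Normality of $\grM/\grF$ then follows from the compositum of the normal extensions $q(\grL)/q(\grF)$ and $q(\grT)/q(\grF)$ via~\eqref{table.equ}.
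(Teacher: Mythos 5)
Your proof is correct and follows essentially the same approach as the paper's: form $\grM = \grL\cdot\grT$, show $\grL\cap\grT=\grZ$, invoke Lemmas \ref{ramified.lem} and \ref{lem:linear-disjoint} for linear disjointness, and conclude maximality by a dimension count; the Galois/normal part is the same compositum argument, with your explicit check that $M/\grF_0$ normal passes to $\grL/\grF$ (via $c=mf$ and the scaled minimal polynomial) filling in a step the paper leaves implicit.
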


\begin{proof}
Since $M$ is maximal it contains $Z(\grD_0)$.
By definition of $\grZ,\grU,\grC$, cf.\ \S\ref{5},
we have $\grZ \subseteq \grL \subseteq \grU$,
and $\grL$ and $\grT$ commute.
Hence, $\grM$ is a graded subfield of $\grD$.
Since $\grL/\grZ$ is inertial we have:
$$
\DIM \grL\grZ \,=\, \DIM {M}{Z(\grD_0)}
\,=\, \deg \grD_0 \,=\, \deg \grU.
$$

As $\grL/\grZ$ is unramified, and $\grT/\grZ$ is totally ramified one
has $\grL\cap\grT=\grZ$. By Lemma~\ref{ramified.lem},  $\grT/\grZ$ is
Galois. Hence, Lemma \ref{lem:linear-disjoint} implies
$$
\DIM\grM\grZ \, = \, \DIM \grL \grZ \cdot \DIM \grT \grZ \, = \,
\deg \grU \cdot \deg \grC  \, = \, \deg \grE.
$$
This shows that $\grM$ is a maximal graded subfield of $\grE$,
cf.\ end of \S\ref{13},
hence also a maximal graded subfield of $\grD$ by
\eqref{canonical-properties.equ}.

Furthermore, if $M=\grL_0$ is Galois (resp.\ normal) over $\grF_0$,
  then $\grL$ is
Galois (resp.\ normal) over $\grF$. As  $\grT$ is Galois over $\grF$
by Lemma \ref{ramified.lem}, we get that  $\grM$ is Galois
(resp.\ normal) over~$\grF$.
\end{proof}


For a maximal graded subfield $\grM$ of $\grD$,
the field $\grM_0$
need not be a maximal subfield of $\grD_0$.
We will therefore modify $\grM$ to enlarge the degree-$0$
part. We start with the following observation:

\begin{lem}\label{maximal-residue.lem}
Let $\grM$ be a maximal graded subfield of $\grD$.
Then, $\grM_0$ is a maximal subfield of $\grD_0$ if and only if
$\grM\supseteq\grZ$ and $|\Gamma_\grM:\Gamma_\grZ|=\deg\grC$.
This holds if $\grM\cap\grC$ is a maximal graded subfield of
$\grC$.
\end{lem}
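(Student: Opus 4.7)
The plan is to establish the equivalence by a dimension count and then derive the sufficient condition by locating $\grM$ inside a specific centralizer, where Lemma~\ref{ramified.lem}(ii) pins down the grade group.

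For the equivalence, I would first observe that the condition ``$\grM_0$ is a maximal subfield of $\grD_0$'' forces $\grM_0 \supseteq Z(\grD_0) = \grZ_0$, and hence $\grM \supseteq \grZ_0 \cdot \grF = \grZ$; conversely, $\grM \supseteq \grZ$ is one of the assumed conditions on the other side. So in either direction one may assume $\grM \supseteq \grZ$. Since $\grM$ is commutative and contains $\grZ$, we have $\grM \subseteq C_\grD(\grZ) = \grE$. By \eqref{canonical-properties.equ}, $[\grD:\grE] = [\grZ:\grF]$, so any maximal graded subfield of $\grD$ containing $\grZ$ has dimension $\deg\grD/[\grZ:\grF] = \deg\grU\cdot\deg\grC$ over $\grZ$. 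Applying the fundamental equality \eqref{defectless.equ} to $\grM/\grZ$ yields
$$[\grM_0:\grZ_0]\cdot|\Gamma_\grM:\Gamma_\grZ|\,=\,\deg\grU\cdot\deg\grC.$$
Since $\grM_0$ is a subfield of $\grD_0$ containing $\grZ_0$, one has $[\grM_0:\grZ_0]\le \deg\grD_0 = \deg\grU$, with equality exactly when $\grM_0$ is maximal in $\grD_0$. Therefore $|\Gamma_\grM:\Gamma_\grZ| = \deg\grC$ is equivalent to $\grM_0$ being a maximal subfield of $\grD_0$.

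For the sufficient condition, set $\grT := \grM \cap \grC$ and assume $\grT$ is a maximal graded subfield of $\grC$. Then $\grT \supseteq Z(\grC) = \grZ$, so $\grM \supseteq \grZ$. The graded double centralizer theorem, together with $\grE = \grU\otimes_\grZ\grC$ and $C_\grC(\grT)=\grT$ (by maximality of $\grT$ in $\grC$), gives $C_\grE(\grT) = \grU\otimes_\grZ\grT$. Since $\grM$ is commutative and contains $\grT$, $\grM \subseteq C_\grE(\grT)$, so $\Gamma_\grM \subseteq \Gamma_{C_\grE(\grT)}$. By Lemma~\ref{ramified.lem}(ii), $\Gamma_{C_\grE(\grT)} = \Gamma_\grT$; combined with the inclusion $\Gamma_\grT \subseteq \Gamma_\grM$ coming from $\grT \subseteq \grM$, this forces $\Gamma_\grM = \Gamma_\grT$, so $|\Gamma_\grM:\Gamma_\grZ| = [\grT:\grZ] = \deg\grC$ (using that $\grT/\grZ$ is totally ramified).

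The main obstacle is this last step: the dimension bound $[\grM_0:\grZ_0]\le\deg\grU$ only yields $|\Gamma_\grM:\Gamma_\grZ|\ge\deg\grC$, and the opposite inequality is not immediate from the maximality of $\grM$ in $\grD$ alone (the grade group of $\grM$ could a priori be as large as that of $\grE$). Lemma~\ref{ramified.lem}(ii) resolves this by constraining $\grM$ to sit inside $C_\grE(\grT)$, whose grade group is already determined to equal $\Gamma_\grT$.
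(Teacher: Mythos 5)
Your proof is correct and follows essentially the same route as the paper: use $[\grD:\grE]=[\grZ:\grF]$ and the fundamental equality for $\grM/\grZ$ to reduce the equivalence to a dimension count, then for the sufficient condition invoke $\grM\subseteq C_\grE(\grM\cap\grC)$ together with Lemma~\ref{ramified.lem}(ii) to pin down $\Gamma_\grM=\Gamma_{\grM\cap\grC}$. The only difference is presentational (you re-derive $C_\grE(\grT)=\grU\otimes_\grZ\grT$ explicitly rather than citing it as already established), and your closing remark correctly identifies why the extra hypothesis $\grM\cap\grC$ maximal is what forces the grade-group equality.
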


\begin{proof}
If $\grM_0$ is a maximal subfield of $\grD_0$ then
$\grM_0 \supseteq Z(\grD_0) = \grZ_0$, so
$\grM\supseteq\grZ$
by definition of $\grZ$.
Hence, we assume $\grM\supseteq\grZ$ throughout the proof.
Then $\grM\subseteq C_\grD(\grZ) = \grE$,
so $\grM$ is a maximal graded subfield of $\grE$.
We have
$$
[\grM_0:\grZ_0]\cdot|\Gamma_\grM:\Gamma_\grZ|\,=\,
[\grM:\grZ]\,=\,\deg\grE\,=\,\deg\grU\cdot\deg\grC
\,=\,\deg\grD_0\cdot\deg\grC.
$$
Hence, $\grM_0$ is a maximal subfield of $\grD_0$
(i.e.\ ${[\grM_0:\grZ_0]}=\deg\grD_0$) if and only if
$|\Gamma_\grM:\Gamma_\grZ|=\deg\grC$.

Suppose now that  $\grM\cap\grC$ is a maximal graded subfield of $\grC$.
Then, since $\grM\subseteq C_\grE(\grM\cap\grC)$, by
Lemma~\ref{ramified.lem}
we have
$$
\Gamma_\grM \,\subseteq\, \Gamma_{C_\grE(\grM\cap\grC)}
\,=\,\Gamma_{\grM\cap\grC}\,\subseteq\,\Gamma_\grM,
$$
hence, as $\grM\cap \grC$ is totally ramified over $\grZ$,
 ${|\Gamma_\grM:\Gamma_\grZ|= [\grM\cap \grC:\grZ]=\deg\grC}$.
\end{proof}

The following Proposition gives the  ``only if'' implication  of
Theorem~\ref{graded-main.thm}, and completes its proof.

\begin{prop}\label{max-residue.prop}
Let $\grM$ be a maximal graded subfield of $\grD$
and let $\grT$ be a maximal graded subfield of $\grC$.
Then $\grM':=(\grM\cap C_\grD(\grT))\cdot \grT$ is a maximal graded
subfield of~$\grD$ for which  $\grM'_0$ is a maximal subfield of
$\grD_0$.

Furthermore, if $\grM$ is Galois $($resp.\ normal$)$ over $\grF$ then
$\grM'$ is Galois $($resp.\ normal$)$ over $\grF$  and $\grM'_0$ is
Galois $($resp.\ normal$)$ over $\grF_0$.
\end{prop}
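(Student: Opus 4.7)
The plan is to prove that $\grM'$ is a maximal graded subfield of $\grD$ and then invoke Lemma~\ref{maximal-residue.lem} for the residue claim; the Galois/normal part will follow by tracking an $\Aut(\grM/\grF)$-action on $\grN:=\grM\cap C_\grD(\grT)$. Since $\grT\supseteq\grZ$, the graded double centralizer theorem together with $C_\grC(\grT)=\grT$ gives $C_\grD(\grT)=C_\grE(\grT)=\grU\otimes_\grZ\grT$, a graded central simple algebra over $\grT$ of degree $\deg\grU$ that is unramified over $\grT$. Then $\grM'=\grN\cdot\grT$ is a commutative graded subalgebra of $C_\grD(\grT)$, hence a graded subfield containing $\grT$, and $\grM'\cap\grC=\grT$ by maximality of $\grT$ in $\grC$. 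By Lemma~\ref{maximal-residue.lem}, it suffices to show $\grM'$ is maximal in $\grD$, equivalently $[\grM':\grT]=\deg\grU$.

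The key tool is a group homomorphism $\psi\colon\Gamma_\grD\to\Gal(\grT/\grF)$ sending $\gamma$ to $\int(d_\gamma)|_\grT$ for any nonzero $d_\gamma\in\grD_\gamma$. This is well-defined since $\grD_0\subseteq\grU$ commutes with $\grT\subseteq\grC$, and it takes values in $\Gal(\grT/\grF)$ because, as in the proof of Lemma~\ref{ramified.lem}, every graded inner automorphism of $\grD$ preserves $\grC$ and is degree-preserving, hence preserves $\grT=\bigoplus_{\gamma\in\Gamma_\grT}\grC_\gamma$ by \eqref{eq:trsubfield}. Its kernel is visibly $\Gamma_\grT$, and since $|\Gamma_\grD:\Gamma_\grT|=\deg\grC\cdot[\grZ_0:\grF_0]=[\grT:\grF]$ by \eqref{canonical-properties.equ}, $\psi$ descends to an isomorphism $\Gamma_\grD/\Gamma_\grT\cong\Gal(\grT/\grF)$.

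Applying $\psi$ homogeneous component by component identifies $\grN=\bigoplus_{\gamma\in\Gamma_\grM\cap\Gamma_\grT}\grM_\gamma$, so $\grN_0=\grM_0$ and $\Gamma_\grN=\Gamma_\grM\cap\Gamma_\grT$. Using $C_\grD(\grM)=\grM$ (maximality of $\grM$), the fixed field $\grT^{\psi(\Gamma_\grM)}$ equals $\grT\cap\grM$, so $[\grT:\grM\cap\grT]=|\Gamma_\grM:\Gamma_\grN|$. Combined with $[\grM:\grF]=\deg\grD=\deg\grU\cdot\deg\grC\cdot[\grZ_0:\grF_0]$, a direct calculation yields $[\grN:\grM\cap\grT]=\deg\grU$. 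Since $\grT/(\grM\cap\grT)$ is Galois as a sub-extension of $\grT/\grF$, Lemma~\ref{lem:linear-disjoint} gives $[\grM':\grT]=[\grN:\grN\cap\grT]=\deg\grU$, which completes the maximality argument.

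For the Galois/normal claim, the graded form of Skolem--Noether realizes each $\sigma\in\Aut(\grM/\grF)$ as $\int(d_\sigma)$ for some homogeneous $d_\sigma\in\grD$; as above, such $\int(d_\sigma)$ preserves $\grT$, hence $C_\grD(\grT)$, hence $\grN$. Together with $\grN^{\Aut(\grM/\grF)}\subseteq\grM^{\Aut(\grM/\grF)}=\grF$ (in the Galois case) or \eqref{12} (in the normal case), this shows $\grN/\grF$ is Galois (resp.\ normal). Then $\grM'=\grN\cdot\grT$ is a compositum over $\grF$ of Galois (resp.\ normal) graded subfields, hence Galois (resp.\ normal) over $\grF$, and $\grM'_0/\grF_0$ is Galois (resp.\ normal) by \eqref{0-comp-normal}. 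The main obstacle is the dimension count $[\grN:\grM\cap\grT]=\deg\grU$, which rests on setting up $\psi$ and identifying $\grM\cap\grT$ as the fixed field $\grT^{\psi(\Gamma_\grM)}$; the remaining steps invoke earlier lemmas essentially formally.
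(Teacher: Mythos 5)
Your maximality argument is correct but takes a genuinely different route from the paper. You introduce a homomorphism $\psi\colon\Gamma_\grD\to\Gal(\grT/\grF)$ with kernel $\Gamma_\grT$, identify $\grM\cap\grT$ as the fixed field of $\psi(\Gamma_\grM)$, and do explicit dimension bookkeeping to reach $[\grN:\grM\cap\grT]=\deg\grU$. The paper instead proves a short general inequality (Lemma~\ref{lem:AB}): for a graded field $\grA$ and $\grB\subseteq\grC$, one has $[\grA:\grA\cap C_\grD(\grB)]\le[\grB:\grA\cap\grB]$, via $\Gamma_{\grA'}=\Gamma_\grA\cap\Gamma_{C_\grD(\grB)}$ and the containment $\Gamma_\grA+\Gamma_{C_\grD(\grB)}\subseteq\Gamma_{C_\grD(\grA\cap\grB)}$. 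Together with linear disjointness (your step as well), this gives $[\grM':\grF]\ge[\grM:\grF]$ in one line and avoids the $\psi$ machinery and the surjectivity count $|\Gamma_\grD:\Gamma_\grT|=[\grT:\grF]$. Both are valid, but the paper's lemma is leaner and also hands you for free that $\grM/\grN$ is totally ramified, which is exactly what the Galois/normal step needs.

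On the Galois/normal part there is a real (though patchable) gap: you invoke a graded Skolem--Noether theorem producing a homogeneous conjugating element $d_\sigma$ for each $\sigma\in\Aut(\grM/\grF)$, but no such result is stated or cited in the paper, and its homogeneous form requires an argument about degree shifts in graded simple modules. It is also unnecessary. You already established $\grN=\bigoplus_{\gamma\in\Gamma_\grM\cap\Gamma_\grT}\grM_\gamma$ and $\grN_0=\grM_0$, i.e.\ $\grM/\grN$ is totally ramified; from there Lemma~\ref{tot-ramified.lem} immediately yields that $\grN/\grF$ is Galois (resp.\ normal). Alternatively, the elementary fact from \S\ref{graded-fld.sec} that any $\sigma\in\Aut(\grM/\grF)$ preserves the grading of $\grM$ already shows $\sigma(\grN)=\grN$ without Skolem--Noether. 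Either fix closes the gap; as written, the step rests on an unreferenced external theorem.
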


The proof relies on the following lemma:

\begin{lem}\label{lem:AB}
Let $\grA,\grB$ be graded subalgebras of $\grD$ containing $\grF$
such that $\grA$ is a graded field and $\grB \subseteq\grC$.
Define $\grA':=\grA\cap C_\grD(\grB)$.
Then, $\grA/\grA'$ is totally ramified and
$[\grA:\grA']\leq[\grB:\grA\cap\grB]$.
\end{lem}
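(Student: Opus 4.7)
The plan is to establish the total ramification and the dimension inequality in turn.

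For total ramification, I would show $\grA_0 = \grA'_0$. Since $\grB \subseteq \grC = C_\grD(\grU)$ by hypothesis, every element of $\grB$ commutes with $\grU = \grD_0\otimes_{\grF_0}\grF$; in particular $\grB$ centralizes $\grD_0 \supseteq \grA_0$, giving $\grA_0 \subseteq C_\grD(\grB) \cap \grA = \grA'$. Together with the trivial reverse inclusion $\grA'_0 \subseteq \grA_0$, this yields $\grA_0 = \grA'_0$, so $\grA/\grA'$ is totally ramified, and the fundamental equality \eqref{defectless.equ} gives $[\grA:\grA'] = |\Gamma_\grA : \Gamma_{\grA'}|$.

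For the dimension bound, set $\grE := \grA\cap\grB$. The subring $\grB$ is finite-dimensional over $\grF$ and inherits the absence of zero divisors from $\grC$, hence is itself a graded division ring containing the graded subfield $\grE$, and is therefore free of rank $[\grB:\grE]$ as a graded $\grE$-module. For each $\alpha \in \Gamma_\grA$, pick a nonzero $a_\alpha \in \grA_\alpha$. Since $\grC$ is invariant under conjugation by any homogeneous unit of $\grD$ (canonical property of \S\ref{5}), the restriction $\psi_\alpha := \int(a_\alpha)|_\grB : \grB \to \grC$ is an injective graded ring homomorphism that fixes $\grE$ pointwise, the latter because $\grE \subseteq \grA$ commutes with $a_\alpha$. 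The assignment $\alpha \mapsto \psi_\alpha$ descends to a well-defined injection $\Gamma_\grA/\Gamma_{\grA'} \hookrightarrow \Phi$, where $\Phi$ is the set of graded injective ring homomorphisms $\grB \to \grC$ fixing $\grE$: well-definedness on cosets follows because $\alpha - \alpha' \in \Gamma_{\grA'}$ implies $a_\alpha a_{\alpha'}^{-1} \in \grA \cap C_\grD(\grB) = \grA'$, and injectivity because $\psi_\alpha = \psi_{\alpha'}$ forces $a_\alpha a_{\alpha'}^{-1} \in \grA'$.

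The main obstacle lies in bounding $|\Phi| \leq [\grB:\grE]$. My plan is to invoke a graded analogue of Dedekind's linear independence of ring homomorphisms into a domain: distinct $\psi \in \Phi$ are $\grC$-linearly independent as elements of the free left $\grC$-module $\Hom_\grE(\grB,\grC)$ of rank $[\grB:\grE]$. The subtle point is that $\grC$ need not be commutative; this is handled by exploiting the rank-one structure of each homogeneous component $\grC_\beta$ over $\grZ_0$, which pins down the image $\psi(b)$ for homogeneous $b \in \grB$ as a scalar multiple in $\grZ_0$ of a fixed basis element, the scalars then being constrained by the multiplicative relations of $\grB$. Concatenating this bound with the injection above yields $[\grA:\grA'] = |\Gamma_\grA:\Gamma_{\grA'}| \leq |\Phi| \leq [\grB:\grE]$, as desired.
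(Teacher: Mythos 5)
Your treatment of total ramification is essentially the paper's: both reduce to $\grB\subseteq\grC=C_\grD(\grU)$ centralizing $\grD_0$, hence $\grA_0\subseteq C_\grD(\grB)$ and $\grA'_0=\grA_0$. For the dimension bound, however, you take a genuinely different route. The paper stays entirely at the level of value groups: it records $\Gamma_{\grA'}=\Gamma_\grA\cap\Gamma_{C_\grD(\grB)}$ via Lemma~\ref{lem:tot-ram1}(ii), uses $\grA,\,C_\grD(\grB)\subseteq C_\grD(\grA\cap\grB)$ to bound $|\Gamma_\grA+\Gamma_{C_\grD(\grB)}:\Gamma_{C_\grD(\grB)}|$, and closes with the graded double centralizer theorem, $[C_\grD(\grA\cap\grB):C_\grD(\grB)]=[\grB:\grA\cap\grB]$. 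You instead inject $\Gamma_\grA/\Gamma_{\grA'}$ into the set of graded $\grE$-algebra embeddings $\grB\to\grC$ via $\alpha\mapsto\int(a_\alpha)|_\grB$ and appeal to linear independence of such embeddings inside $\Hom_\grE(\grB,\grC)$.

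Your route does work, but the final step is only sketched and hides the one real subtlety. The naive Dedekind argument breaks for a noncommutative target because the difference $\psi_i(b')-\psi_1(b')$ ends up on the wrong side of the coefficients. The rescue is precisely what you gesture at: for homogeneous $b'\in\grB_\beta$ one has $\psi_i(b')=z_i(b')\,b'$ with $z_i(b')\in\grC_0=\grZ_0$, and since $\grZ\subseteq Z(\grC)$ these scalars commute with all of $\grC$. With that observation the usual minimal-relation argument goes through verbatim ($\grC$ is a domain because $\Gamma$ is torsion-free, so $c_i\bigl(z_i(b')-z_1(b')\bigr)=0$ with $c_i\neq0$ forces $z_i=z_1$). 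You should state $\grZ_0\subseteq Z(\grC)$ explicitly and carry out the minimality argument rather than leave it as a plan; as written, the ``scalars constrained by the multiplicative relations'' phrase does not yet constitute a proof. Compared to the paper's proof, yours is somewhat heavier (it needs freeness of $\grB$ over $\grE$, the centrality of $\grZ_0$ in $\grC$, and a noncommutative independence lemma), whereas the paper's chain of index inequalities is shorter and sidesteps noncommutativity entirely; on the other hand your argument makes visible the role of the conjugation maps $\int(a_\alpha)$, which is a nice structural viewpoint.
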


\begin{proof}

Since $\grB\subseteq\grC$, one has
$U=C_\grD(\grC)\subseteq C_\grD(\grB)$, hence
${C_\grD(\grB)_0=\grU_0=\grD_0}$. Thus,
$\grD$ is totally ramified over $C_\grD(\grB)$.
Hence, (i) $\grA/\grA'$ is totally ramified (for
$\grA'_0=\grA_0\cap C_\grD(\grB)_0=\grA_0\cap\grD_0=\grA_0$),
and (ii) ${\Gamma_{\grA'}=\Gamma_\grA\cap\Gamma_{C_\grD(\grB)}}$ by
Lemma~\ref{lem:tot-ram1}(ii).
Since $\grA$ is a graded field, we have
${\grA\subseteq C_\grD(\grA)\subseteq C_\grD(\grA\cap\grB)}$;
also, $C_\grD(\grB) \subseteq C_\grD(\grA \cap \grB)$.
These together yield,
\begin{multline*}
[\grA:\grA']=|\Gamma_\grA:\Gamma_{\grA'}|=
|\Gamma_\grA:\Gamma_{\grA}\cap\Gamma_{C_\grD(\grB)}|=
|\Gamma_\grA+\Gamma_{C_\grD(\grB)}:\Gamma_{C_\grD(\grB)}|\\
\leq|\Gamma_{C_\grD(\grA\cap\grB)}:\Gamma_{C_\grD(\grB)}|
\leq[C_\grD(\grA\cap\grB):C_\grD(\grB)]=[\grB:\grA\cap\grB],
  \end{multline*}
with the last equality given by the graded double centralizer theorem.
\end{proof}

\begin{proof}[Proof of Proposition \ref{max-residue.prop}]
By Lemma \ref{ramified.lem}, $\grT$ is Galois over $\grF$. Thus, by
Lemma \ref{lem:linear-disjoint}, $\grT$ is linearly disjoint from
$\grM\cap C_\grD(\grT)$
over their intersection $(\grM\cap C_\grD(\grT))\cap \grT = \grM\cap\grT$.
 Hence, by definition of $\grM'$, we have
${[\grM':\grM\cap C_\grD(\grT)]=
[\grT:\grM\cap\grT].}$
Lemma \ref{lem:AB}, applied with $\grA=\grM$ and $\grB=\grT$,
states that this dimension is $\geq[\grM:\grM\cap C_\grD(\grT)]$,
so $[\grM':\grF] \ge [\grM:\grF]$.
Since  $\grM$ is maximal it follows that $\grM'$ is maximal.
Moreover, since $\grM'$ contains $\grT$,
$\grM'_0$ is a maximal subfield of $\grD_0$ by
Lemma~\ref{maximal-residue.lem}.

Assume that $\grM$ is Galois (resp.\ normal) over $\grF$.
The application of Lemma \ref{lem:AB} above also showed that
$\grM$ is totally ramified over $\grM\cap C_\grD(\grT)$.
Hence, by Lemma \ref{tot-ramified.lem},
$\grM\cap C_\grD(\grT)$ is Galois (resp.\ normal) over $\grF$.
Since, by Lemma \ref{ramified.lem}, $\grT$ is Galois over~$\grF$ , we
get that $\grM'$ is Galois (resp.\ normal) over $\grF$.
Hence, $\grM'_0$ is Galois (resp.\ normal) over $\grF_0$, by~
\eqref{0-comp-normal}.
\end{proof}
\begin{rem} \begin{enumerate} \item For a graded subfield $\grM$ of $\grD$ which is not
necessarily maximal, the proof gives $[\grM':\grF]\geq[\grM:\grF]$,
where $\grM':=(\grM\cap C_\grD(\grT))\cdot \grT$.
\item The proof shows that Propositions \ref{prop:gradedmaxsubf} and \ref{max-residue.prop}, and hence also Theorem~\ref{graded-main.thm} hold more generally when  $\grF$ is a proper subfield of $Z(\grD)$ under the assumption that $\grT/\grF$ is Galois.
\end{enumerate}
\end{rem}

Theorem \ref{main.thm} follows from the following corollary:

\begin{cor}\label{cor:tamemaxsubf}
Let $F$ be a Henselian field, and let $D$ be a tame division algebra
with center $F$.
The following are equivalent:
\begin{enumerate}
\item[(a)]
$D$ has a maximal subfield Galois over $F$.
\item[(b)]
$\ovl D$ has a maximal subfield Galois over $\ovl F$.
\item[(c)]
$D$ has a maximal subfield Galois and tamely ramified over $F$.
\end{enumerate}
Moreover, the list can be extended by the three conditions
$(a'),(b'),(c')$
which are obtained from $(a),(b),(c)$ by replacing \lq Galois' with
\lq normal'.
\end{cor}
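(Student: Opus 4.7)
The plan is to deduce the corollary directly from the graded Theorem \ref{graded-main.thm} via the tame $D \leftrightarrow \gr(D)$ dictionary of \S\ref{17}. I will establish the cycle $(c) \Rightarrow (a) \Rightarrow (b) \Rightarrow (c)$; the normal versions $(a')$, $(b')$, $(c')$ follow along the same chain by invoking the ``normal'' clauses of Theorem \ref{graded-main.thm}, Propositions \ref{prop:gradedmaxsubf}--\ref{max-residue.prop}, and the tame-lift correspondence.

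The implication $(c) \Rightarrow (a)$ is immediate. For $(a) \Rightarrow (b)$, I would take a maximal subfield $M$ of $D$ Galois over $F$. Since any maximal subfield of a tame Henselian division algebra is itself tamely ramified over the center (a standard consequence of the tameness of $D$, cf.\ \cite{jacob-wadsworth:div-alg-hensel-field}), the correspondence of \S\ref{17} identifies $\gr(M)$ as a maximal graded subfield of $\gr(D)$, and it is Galois over $\gr(F)$ because $M/F$ is Galois. Theorem \ref{graded-main.thm} then produces a maximal subfield of $\gr(D)_0 = \ovl D$ Galois over $\gr(F)_0 = \ovl F$, which is exactly~$(b)$.

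For $(b) \Rightarrow (c)$, Theorem \ref{graded-main.thm} yields a maximal graded subfield $\grN$ of $\gr(D)$ Galois over $\gr(F)$, which by Proposition \ref{prop:gradedmaxsubf} can be taken of the shape $\grL\cdot \grT$ with $\grL = N\otimes_{\gr(F)_0}\gr(F)$ inertial and $\grT$ a maximal graded subfield of $\grC$. The essential additional observation is that this $\grN$ is tame over $\gr(F)$: its residue is Galois hence separable over $\gr(F)_0$ by \eqref{0-comp-normal}, and since $\Gamma_\grL = \Gamma_{\gr(F)}$ its ramification satisfies $|\Gamma_\grN:\Gamma_{\gr(F)}| = |\Gamma_\grT:\Gamma_{\gr(F)}|$, which is coprime to $\charak\gr(F)_0$ by the characteristic condition verified in the proof of Lemma \ref{ramified.lem}. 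The tame lift of \S\ref{17} then produces a tamely ramified Galois extension $M/F$ with $[M:F] = [\grN:\gr(F)] = \deg D$, which embeds as a maximal subfield of $D$ and establishes $(c)$.

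The only anticipated obstacle is the tameness bookkeeping on the two sides of the dictionary — that the Galois maximal subfield in $(a)$ is tame over $F$, and that the graded Galois maximal subfield produced by Proposition \ref{prop:gradedmaxsubf} is tame over $\gr(F)$ — without which the tame-lift correspondence does not apply. Both are short verifications using \S\ref{17} and Lemma \ref{ramified.lem}; once they are in place the corollary is a routine translation of Theorem \ref{graded-main.thm}, and the entire ``normal'' variant goes through by the same chain with the ``normal'' clauses substituted throughout.
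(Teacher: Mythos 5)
Your plan for $(b)\Rightarrow(c)$ matches the paper and is sound: the graded subfield produced by Proposition~\ref{prop:gradedmaxsubf} is indeed tame over $\gr(F)$ (its degree-$0$ part is separable, its ramification comes entirely from $\grT\subseteq\grC$, whose ramification index is prime to $\charak\grF_0$), so the tame lift applies. The gap is in $(a)\Rightarrow(b)$, and it is not mere bookkeeping.

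You assert that ``any maximal subfield of a tame Henselian division algebra is itself tamely ramified over the center'' and call it a standard consequence of the tameness of $D$. This is false. Take $F=\F_p((s))((t))$ with the $t$-adic Henselian valuation, so $\ovl F = \F_p((s))$, and let $L$ be the unramified extension of $F$ with $\ovl L=\F_{p^p}((s))$. The cyclic algebra $D=(L/F,\sigma,t)$ is nicely semiramified of degree $p$, hence tame. But the Artin--Schreier extension $M=F[\eta]$, $\eta^p-\eta=t^{-1}$, is Galois, totally (wildly) ramified of degree $p$, and it splits $D$: writing $t=\eta^{-p}(1+t\eta)$, one has $\eta^{-p}=N_{LM/M}(\eta^{-1})$ and $1+t\eta$ is a $1$-unit, hence a norm from the unramified extension $LM/M$. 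So $M$ embeds as a maximal subfield of $D$ that is Galois but not tame over $F$. Consequently the tame correspondence of~\S\ref{17} does not apply to $\gr(M)$, and in fact $\gr(M)/\gr(F)$ is purely inseparable (hence not Galois) in this example, even though $M/F$ is Galois. This is precisely why the paper routes $(a)\Rightarrow(b)$ through the normal versions: the general correspondence preserves normality of $\gr(M)/\gr(F)$ for \emph{any} maximal subfield (via~\cite[Theorem~1.5]{mounirh-wadsworth:semiramifed} and defectlessness of $M$), while preservation of the Galois property genuinely requires tameness of $M/F$, which can fail. The paper then uses Proposition~\ref{max-residue.prop} in its normal form, \eqref{0-comp-normal}, and the separability of $Z(\ovl D)/\ovl F$ together with Saltman's lemma to pass between normal and Galois. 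Your outline discards this detour, but without the detour the implication $(a)\Rightarrow(b)$ is unproved.

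A secondary remark: even had your tameness claim been true, citing Lemma~\ref{ramified.lem} for it would not work, since that lemma concerns maximal graded subfields of the canonical totally ramified subalgebra $\grC$, not arbitrary (graded) maximal subfields of $\grD$.
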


\begin{proof}[Proof of Corollary \ref{cor:tamemaxsubf}]
Trivially, $(a),(b),(c)$ imply $(a'),(b'),(c')$ respectively, and
$(c')$ implies $(a')$.
Nontrivially, by  \cite[Lemma~3]{saltman:noncr-prod-small-exp},
$(a')$ implies~$(a)$.
Since $D$ is tame, $Z(\ovl D)/\ovl F$ is separable.
Therefore, as was noted in \cite[Prop.~14.2, p.~59]{hanke:thesis},
also $(b')$ implies $(b)$.
We will show $(b)\Rightarrow(c)$ and $(a')\Rightarrow(b')$,
then the proof is completed:
\begin{equation*}
  \xymatrix{ (b) \ar@{<=>}[d] \ar@{=>}[r] & (c) \ar@{=>}[d] & (a) \ar@{<=>}[d] \\
(b')  & (c') \ar@{=>}[r] & (a')  \ar@/^1pc/@{=>}[ll]
}\end{equation*}

$(b)\Rightarrow (c)$:
Suppose $\ovl D = \gr(D)_0$ has a maximal subfield $M$ Galois  over
$\ovl F= \gr(F)_0$.
By Proposition~\ref{prop:gradedmaxsubf},
$\gr(D)$~has a maximal graded subfield $\grM$
that is Galois  over $Z(\gr(D))$.  But $Z(\gr(D)) = \gr(F)$,
as $D$~is tame.
Let $M'$ be the tame lift of $\grM$ over $F$ (cf.\ \S\ref{17}),
i.e.\ the unique tame Galois extension of $F$ with $\gr(M')=\grM$.
By the functoriality mentioned in \S\ref{17}, this $M'$ is a maximal
subfield of $D$, since it splits $D$ and
$$
[M':F] \, = \, [\gr(M'):\gr(F)] \, = \, \deg \,\gr(D) \, = \, \deg D.
$$



$(a')\Rightarrow (b')$:
Let $M$ be a maximal subfield of $D$ that is normal over~$F$.
Then $\gr(M)/\gr(F)$ is normal (cf.\ \S\ref{17}).
As $D$ is defectless over $F$, i.e., equality \eqref{nodefect} holds,
$M$ must also be defectless over $F$.  Thus,
\begin{equation*}\label{eq:degrees}
\DIM{\gr(M)}{\gr(F)}\, = \, \DIM MF \,=\, \deg D \, = \, \deg\,\gr(D),
\end{equation*}
showing that $\gr(M)$ is a maximal graded subfield of $\gr(D)$.
By Proposition \ref{max-residue.prop}, $\gr(D)$ has a maximal graded
subfield $\grM'$ normal over $\grF$ and such that $\grM'_0$ is a maximal
subfield of  ${\ovl D= \gr(D)_0}$. By \eqref{0-comp-normal},
$\grM'_0/F$ is normal.
\end{proof}

\section{Tamely ramified noncrossed products}


\subsection{Simple residue fields}

It is a fundamental question  to determine which division algebras
over a given field $F$ are crossed products.  As a corollary to
Theorem~    \ref{main.thm} we obtain an answer when $F$ is Henselian
and division algebras over the residue field
$K:=\oline F$ are sufficiently well understood.
Let $\cd G_K$ denote the cohomological dimension of the absolute
Galois group $G_K$ of $K$.



\begin{cor}\label{cor:positive} Let $F$ be a Henselian field whose
residue field $K$ is a  local field\,\footnote{We call a field
{\it local} if it is a finite extension of $\mQ_p$
or~$\mathbb{F}_p((t))$ for some prime $p$.}, real closed field,
or satisfies $\cd G_K\leq 1$, then every tame central division algebra
over $F$ is a crossed product.
\end{cor}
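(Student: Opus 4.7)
The plan is to apply Theorem~\ref{main.thm}, or more precisely the equivalence (a)$\Leftrightarrow$(b) of Corollary~\ref{cor:tamemaxsubf}, which reduces the corollary to showing that $\ovl D$ has a maximal subfield Galois over $K:=\ovl F$. The common starting observation in all three cases is that the center $L:=Z(\ovl D)$ is itself already Galois abelian over $K$: the homomorphism $\theta_D\colon \Gamma_D\to \Aut(L/K)$ recalled in the proof of Lemma~\ref{JW.lem} is surjective, and the equality $[L:K]=|\image\theta_D|$ established there, combined with the separability of $L/K$ coming from tameness of~$D$, forces $L/K$ to be Galois.

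The two cases of low cohomological dimension require little further work. If $\cd G_K\leq 1$ then $\Br(L')=0$ for every algebraic extension $L'/K$, so $\ovl D=L$ is itself the Galois maximal subfield sought. If $K$ is real closed, then its only nontrivial finite extension is the algebraically closed field $K(\sqrt{-1})$; thus either $L=K(\sqrt{-1})$ and $\ovl D=L$, or $L=K$ and $\ovl D$ is either~$K$ or the Hamilton quaternion algebra, in the latter case $K(\sqrt{-1})$ serving as a Galois maximal subfield.

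The remaining, substantive case is $K$ local. Here the main point, and the only real obstacle in the whole argument, is to choose the maximal subfield of~$\ovl D$ to be Galois over~$K$ rather than merely over~$L$. Since $L$ is then also a local field, the structure theory of central simple algebras over local fields provides an unramified maximal subfield $M\subseteq \ovl D$ with $[M:L]=n:=\deg\ovl D$. Letting $f$ be the residue degree of~$L/K$ and $K^{\mathrm{un}}_{fn}$ the unramified extension of~$K$ of degree~$fn$, a comparison of residue fields shows $M = L\cdot K^{\mathrm{un}}_{fn}$. Since $L/K$ is Galois by the first paragraph and $K^{\mathrm{un}}_{fn}/K$ is cyclic (being generated by a root of unity of order prime to the residue characteristic), their compositum $M/K$ is Galois, providing the maximal subfield called for by Theorem~\ref{main.thm}.
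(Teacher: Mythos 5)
Your proof is correct and follows essentially the same route as the paper's: reduce via Theorem~\ref{main.thm} to finding a maximal subfield of $\ovl D$ Galois over $K$, observe that $Z(\ovl D)/K$ is Galois (abelian) by tameness, dispose of the $\cd\le 1$ and real-closed cases immediately, and in the local case obtain the required maximal subfield by adjoining unramified extensions of $K$. The only cosmetic difference is direction: the paper takes a Galois extension $Z(\ovl D)\cdot K^{\mathrm{un}}$ of the correct degree and cites the fact that over local fields any such field embeds as a maximal subfield, whereas you start from the unramified maximal subfield of $\ovl D$ and identify it as $Z(\ovl D)\cdot K^{\mathrm{un}}_{fn}$; these produce the same field, so the argument is identical in substance.
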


\begin{proof} By Theorem \ref{main.thm}, it suffices to show that
$\oline{D}$ has a maximal subfield which is Galois over
$K$. If $\cd G_K\leq 1$, then $\oline D=Z(\oline D)$ is
a field which, since $D$ is tame, is Galois over $K$.  If $K$ is
a real closed field, then either $K$ or $K(\sqrt{-1})$ is a maximal
subfield of $\oline D$ which is Galois over $K$. If $K$ is a local
field then $Z(\oline D)$ has  extensions of arbitrary degree which
are Galois over $K$, simply by composing~$Z(\oline D)$ with
unramified extensions of $K$. This gives the desired result since over
local fields every field of degree $\deg \oline D$ over $Z(\oline D)$
is a maximal subfield of $\oline D$.
\end{proof}

Note that (1) if $K$ is real closed the assertion can be proved
directly without using Corollary \ref{cor:positive}; (2) examples of
fields $K$ for which ${\cd G_K\leq 1}$ include finite fields, and by
Tsen's theorem \cite[\S 19.4]{pierce:ass-alg}, function fields of
curves over algebraically closed fields.


\subsection{Global residue fields}\label{sec:global-residue}

Let $\Gamma$ be the value group of the Henselian
 valuation on $F$. We consider
next  the simplest residue field $K:=\oline F$ for which noncrossed
products  exist over $F$, namely when  $K$ is a global field  \cite{brussel:noncr-prod}.

The tame Brauer group $\TBr(F)$ is described by a generalized Witt
theorem~
\cite[Proposition~3.5]{aljadeff-sonn-wasdworh:projective-schur}\footnote
{The decomposition of $\TBr(F)$ is
described in
\cite{aljadeff-sonn-wasdworh:projective-schur} on the level of
primary components.} as a direct sum:
\begin{equation}\label{equ:gen-witt}
\TBr(F)\,\cong \,\Br(K)\oplus \Hom(G_K, \Delta/\Gamma) \oplus T,
\end{equation}
where $\Delta$ is the divisible
hull of $\Gamma$, and $T$ is a subgroup consisting of classes of some
totally ramified division algebras. Moreover, the subgroup of
$\TBr(F)$ corresponding to ${\Br(K)\oplus \Hom(G_K,\Delta/\Gamma)}$
(resp.~$\Br(K)$) is the subgroup of classes of inertially split
division algebras (resp.~  inertial  division algebras), as described
by a generalization of  Witt's theorem, see  \cite[Satz~2.3]{scharlau:br-henselkoerper}
or \cite[(5.4), Th.~5.6]{jacob-wadsworth:div-alg-hensel-field}.






For  fixed $\chi\in \Hom(G_K,\Delta/\Gamma)$ and $\eta\in T$, we call
the preimage of $\chi+\eta$ under \eqref{equ:gen-witt} the {\it fiber
over}
$\chi+\eta$.
Note that the isomorphism \eqref{equ:gen-witt} is not entirely canonical and a different choice will give us a different fiber.
However, none of our results depends on this choice.
To describe the location of noncrossed products in
$\TBr(F)$, we ask for which  $\chi$ and $\eta$ the fiber over $\chi+\eta$
contains noncrossed products?
This problem was answered in \cite{hanke-sonn:location}
and \cite{hns:existence-bounds} for the inertially split subgroup,
i.e. when $\eta=0$. In the following
we combine Theorem \ref{main.thm} with the methods of
\cite{hanke-sonn:location} and \cite{hns:existence-bounds} to answer
this problem for the entire group $\TBr(F)$.

To this end, we fix $\chi$ and $\eta$ and let $\fC\subseteq\TBr(F)$ be the fiber over
$\chi+\eta$.
For any $c\in\TBr(F)$ we write $\ovl c$ (resp.\ $Z(\ovl c)$) for the class (resp.\ the center) of the residue algebra of the division algebra in $c$.
By Lemma \ref{JW.lem}, $Z:=Z_\fC:=Z(\oline c)$ and
$\ind c/\ind \oline c$ are independent of the choice of $c \in\fC$.

Note that $Z/K$ is abelian, as division algebras in $\fC$ are tame.
If $Z/K$~is cyclic, we say that it is of {\it infinite height}
if for every integer $m$, $Z/K$ embeds into a cyclic extension
$L/K$ with $[L:Z]=m$.
We will prove

\begin{thm}\label{main.cor} Let $Z'/K$ be the maximal subextension of
$Z/K$ of order prime to $\charak K$. Then $\fC$ consists of
crossed products if and only if $Z'/K$ is cyclic of infinite height.
\end{thm}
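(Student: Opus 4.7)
By Theorem \ref{main.thm}, a class $c\in\fC$ is a crossed product if and only if its residue $\oline c\in\Br(Z)$ has a maximal subfield Galois over $K$.  My strategy is to translate Theorem \ref{main.cor} into a purely residual question about a prescribed coset in $\Br(Z)$, and then to invoke the analysis of such cosets carried out for the inertially split case in \cite{hanke-sonn:location, hns:existence-bounds}.

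First I would describe the set $\{\oline c : c\in\fC\}\subseteq\Br(Z)$.  Any two classes in $\fC$ differ by an inertial class $i\in\Br(K)\hookrightarrow\TBr(F)$, and Lemma \ref{JW.lem} gives that the residue is equivariant under such translations, namely $\overline{c+i}=\oline c+\mathrm{res}_{Z/K}(\oline i)$, while $Z$ and the ratio $\ind c/\ind\oline c$ are fixed.  Consequently
$$
\{\oline c:c\in\fC\}\,=\,\oline c_0 \,+\, \mathrm{res}_{Z/K}(\Br(K))\,\subseteq\,\Br(Z)
$$
is a full coset of the restriction image, and the theorem becomes the assertion that every class in this coset admits a maximal subfield Galois over $K$ if and only if $Z'/K$ is cyclic of infinite height.

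Once the problem is reduced to this form, it is exactly the question treated in the inertially split setting of \cite{hanke-sonn:location, hns:existence-bounds}, where $Z/K$ is likewise an arbitrary finite abelian extension and the fiber reduces to the same type of coset in $\Br(Z)$.  For the ``if'' direction, given $\oline c$ in the coset, one uses the infinite height hypothesis on $Z'/K$ to embed $Z'$ into a cyclic extension of $K$ of arbitrary prime-to-$\charak K$ degree, and then by Grunwald--Wang adjusts its local invariants so that a compatible abelian-over-$K$ extension of $Z$ of degree $\ind\oline c$ splits $\oline c$, yielding the desired Galois maximal subfield.  For the ``only if'' direction, if $Z'/K$ fails to be cyclic or fails to have infinite height, one constructs a class in the coset via local Brauer invariants on $\Br(K)$ for which any Galois-over-$K$ splitting field $L$ would require either a non-cyclic subquotient of $\Gal(L/K)$ incompatible with the structure of $Z'$, or a cyclic extension of $K$ containing $Z'$ of degree exceeding the available height.

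The main obstacle I anticipate is verifying that the presence of a nontrivial totally ramified component $\eta\in T$ does not alter the coset structure above beyond a shift of the base point $\oline c_0$.  Since every totally ramified central division algebra over $F$ has residue equal to $\oline F = K$ and does not contribute to the homomorphism $\theta_D$ that determines $Z$, the component $\eta$ affects $\fC$ only by a shift of $\oline c_0$ (and possibly of the ratio $\ind c/\ind\oline c$), but leaves both $Z$ and the coset $\mathrm{res}_{Z/K}(\Br(K))$ unchanged.  Once this reduction is in place, the assertion reduces to the characterization obtained in \cite{hanke-sonn:location, hns:existence-bounds}.
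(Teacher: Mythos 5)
Your opening move is the same as the paper's: by Theorem \ref{main.thm} the question becomes purely residual, and by Lemma \ref{JW.lem} and \eqref{residue.equ} the residue classes of $\fC$ form a full coset $\oline c_0 + \mathrm{res}_{Z/K}(\Br(K))$ in $\Br(Z)$, with $Z$ and the index ratio independent of the choice of $c\in\fC$, and with $\eta$ affecting only the base point $\oline c_0$. That translation is correct and is exactly what the paper does in \S\ref{sec:global-residue}.

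Where your proposal falls short is the claim that, once you are looking at a coset in $\Br(Z)$, the assertion ``reduces to the characterization obtained in \cite{hanke-sonn:location, hns:existence-bounds}.'' The paper does \emph{not} quote those results as a black box; it has to redo the cover analysis, and this is where genuine new content lives. Concretely: (1) The problem is converted into precise quantitative conditions $(B_m)$ and $(B'_m)$ on the existence of $m$-covers of $Z/K$ with full local degree on prescribed finite sets, and Proposition~\ref{equiv.prop} establishes $(B_m)\Rightarrow(\oline A_m)\Rightarrow(B'_m)$ by choosing the auxiliary set $T$ and carefully adjusting local indices via \cite[Lemma 2.5]{hns:existence-bounds}; your appeal to ``Grunwald--Wang adjusts its local invariants'' waves at this but does not supply the local-degree bookkeeping. (2) The theorems in \cite{hanke-sonn:location} concern covers of $Z'$, whereas the conditions $(B_m),(B'_m)$ concern covers of $Z$; the two are not a priori equivalent because $Z/Z'$ may have $\ell$-power degree with $\ell=\charak K$. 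Bridging this gap is exactly the point of the new Lemma~\ref{subcover.lem}, whose proof uses Schur--Zassenhaus and the structure of the extension $1\to A\to A_\ell\to B_\ell\to 1$ to peel an appropriate $p^n$-cover of $Z'$ out of one of $Z$. Your sketch of the ``only if'' direction makes no attempt at this reduction, so the step from ``$Z'/K$ not cyclic of infinite height'' to the failure of a cover condition for $Z$ is not justified.

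So the reduction to a residue-coset question is correct and matches the paper, but the remainder of your argument defers precisely the work that is nontrivial and new: the conditions $(B_m),(B'_m)$ and Proposition~\ref{equiv.prop}, and the passage from $Z$ to $Z'$ via Lemma~\ref{subcover.lem}. Without those, the proposal is a strategy outline rather than a proof.
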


Moreover, we will show that if $\fC$ contains one noncrossed product
it contains infinitely many of them.

Theorem \ref{main.cor} is already  known if $\fC$ consists of inertially
split division algebras by \cite{hanke-sonn:location, hns:existence-bounds}. Furthermore, for such $\fC$, \cite{hanke-sonn:location, hns:existence-bounds} prove the existence of index bounds which
essentially separate crossed and noncrossed products within the fiber. We do not know
if such bounds exist in fibers which are not inertially split.
Nevertheless, we prove that unless $Z'/K$ is cyclic of infinite height there is a number $m$ (depending only on $Z'$)
such that $\fC$ contains noncrossed products of every index divisible by $m$, see Remark~\ref{bounds.rem}.


\subsection{Residue classes, Galois covers, and their local degrees}

For $m\in\mathbb{N}$,  let $\fC_m$ be the set of $c \in\fC$ with
$m\divides \ind c$,
and $\oline\fC_m$  (resp.~ $\oline\fC$) the set of residue classes of
$\fC_m$ (resp.~ $\fC$).
Note that by Lemma \ref{JW.lem},
$\oline{\alpha+c}=\alpha^Z+\oline c\in \Br(Z)$ for all
$\alpha\in \Br(K),c\in \Br(F)$, where $\alpha+c$ is defined via \eqref{equ:gen-witt}. Hence, for any $\beta\in\oline\fC_m$,
\begin{equation}\label{residue.equ}
\oline\fC_m\,=\,
\{\alpha^Z+\beta : \alpha\in\Br(K), \ind(\alpha^Z+\beta) = m\}.
\end{equation}
By Theorem \ref{main.thm} the following conditions are equivalent:
\begin{enumerate}
\item[($A_m$)] $\fC_m$ consists entirely of crossed products
\item[($\oline{A}_m$)] Every class in $\oline\fC_m$ has a
splitting field $L$, Galois over $K$, with $[L:Z]=~m$.
\end{enumerate}
We call $L\supseteq Z$ an {\it $m$-cover} of $Z/K$ if $L$ is Galois
over $K$ and $[L:Z]=m$. The cover $L$ is {\it cyclic} if $L/K$ is
cyclic. For a prime $\fp$ of $K$, let $K_\fp$ denote the completion
at $\fp$ and let $[L:Z]_\fp:=[L_{\PP'}:Z_{\PP'\cap Z}]$ for any
prime $\PP'$ of $L$ dividing $\fp$.

Let $\beta\in\Br(Z)$. Recall that for a prime $\PP$ of $Z$, the
index $\ind_\PP\beta$ of  $\beta^{Z_\PP}$ equals its exponent
$\exp_\PP\beta$.
By the Albert-Brauer-Hasse-Noether theorem \cite[\S18.4]{pierce:ass-alg},
\begin{equation}\label{ABHN.equ}\beta
\text{ is split by }L\text{ if and only if }
\ind_\PP\beta\,\big|\, [L:Z]_\PP\text{ for every prime }\PP
\text{ of }Z. \end{equation} This allows to translate $(\oline{A}_m)$
into  conditions on the local degrees of covers, as follows. Let
$d_\fp(m):=m$ for every finite prime $\fp$ of $K$, $d_\fp(m)=\gcd(m,2)$
for every real prime $\fp$ of $K$ which is unramified in $Z$, and
$d_\fp(m):=1$ otherwise.

Let $S$ be a finite set of primes of $K$. An $m$-cover $L$ of
$Z/K$ has {\it full local degree in~$S$} if $[L:Z]_\fp=d_\fp(m)$ for
all $\fp\in S$.

\begin{prop}\label{equiv.prop} Assume $\fC_m\neq\varnothing$. There
exists a finite set $T$ of primes of $K$ such that
$(B_m)\Rightarrow (\oline{A}_m)\Rightarrow (B'_m)$, where $(B_m)$
and $(B'_m)$ are the following conditions:
\begin{enumerate}
\item[$(B_m)$] for every $S$, $Z/K$ has an $m$-cover with full
local degree in $S$.
\item[$(B'_m)$] for every  $S$ disjoint from $T$, $Z/K$ has an
$m$-cover  with full local degree in~$S$.
\end{enumerate}
\end{prop}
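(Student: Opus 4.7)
The plan is to use the Albert--Brauer--Hasse--Noether theorem \eqref{ABHN.equ} to translate between classes in $\overline{\fC}_m$ and $m$-covers of $Z/K$, reducing both implications to local conditions on orders of Brauer invariants and residue degrees in $Z/K$.

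For the implication $(B_m)\Rightarrow(\overline{A}_m)$, given $\gamma\in\overline{\fC}_m$, I would let $S_\gamma$ be the finite set consisting of the primes of $K$ ramified in $Z/K$ together with the primes of $K$ below the primes of $Z$ at which $\gamma$ has nontrivial local invariant. Applying $(B_m)$ with $S=S_\gamma$ produces an $m$-cover $L$ of $Z/K$ with $[L:Z]_\fp=d_\fp(m)$ for every $\fp\in S_\gamma$. Using $\ind\gamma=m$, so that $\ind_\PP\gamma\mid m$ for every prime $\PP$ of $Z$, a short case analysis then verifies $\ind_\PP\gamma\mid[L:Z]_\PP$: for $\fp\in S_\gamma$ finite, $[L:Z]_\PP=m$ and the bound is immediate; at $\fp$ real and unramified in $Z$ the prime $\PP$ is real, so $\ind_\PP\gamma\in\{1,2\}$ divides $\gcd(m,2)=[L:Z]_\PP$; at $\fp$ real and ramified in $Z$, or $\fp$ complex, the prime $\PP$ is complex so $\ind_\PP\gamma=1$; and for $\fp\notin S_\gamma$ the invariant $\inv_\PP\gamma$ vanishes by construction. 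Hence \eqref{ABHN.equ} shows that $L$ splits $\gamma$.

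For the converse $(\overline{A}_m)\Rightarrow(B'_m)$, I would fix $\beta\in\overline{\fC}_m$ (so that $\ind\beta=m$ and $m\beta=0$ by \eqref{residue.equ}) and take $T$ to be the finite union of the primes of $K$ ramified in $Z/K$ and the primes of $K$ below the ramification of $\beta$. Given $S$ disjoint from $T$, I would construct $\alpha\in\Br(K)$ using the fundamental exact sequence of global Brauer theory, $0\to\Br(K)\to\bigoplus_\fp\Br(K_\fp)\to\Q/\Z\to 0$, with $\inv_\fp\alpha$ supported in $S$ together with a single auxiliary prime $\fp_0\notin S\cup T$. At each finite $\fp\in S$, choose $\inv_\fp\alpha$ of order $m\cdot g_\fp$, where $g_\fp$ is the largest divisor of $[Z_\PP:K_\fp]$ composed of primes dividing $m$, so that $\inv_\PP\alpha^Z=[Z_\PP:K_\fp]\cdot\inv_\fp\alpha$ attains order exactly $m=d_\fp(m)$ for every $\PP$ above $\fp$; at real $\fp\in S$, set $\inv_\fp\alpha=1/2$ if $m$ is even and $\inv_\fp\alpha=0$ otherwise. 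Choose $\fp_0$ via Chebotarev density applied to the abelian extension $Z/K$ so that the residue degree $[Z_{\PP_0}:K_{\fp_0}]$ equals $\lcm_{\fp\in S}g_\fp$ (a divisor of the exponent of $\Gal(Z/K)$), and set $\inv_{\fp_0}\alpha$ to be the unique element that makes the global sum of invariants vanish. Then $\gamma:=\alpha^Z+\beta$ satisfies $m\gamma=0$, $\ind\gamma=m$, and $\ind_\PP\gamma=d_\fp(m)$ for every $\PP$ above $\fp\in S$, so $\gamma\in\overline{\fC}_m$. Applying $(\overline{A}_m)$ yields an $m$-cover $L$ of $Z/K$ splitting $\gamma$, and \eqref{ABHN.equ} gives $[L:Z]_\fp\ge d_\fp(m)$ for $\fp\in S$; equality is then forced by $[L:Z]_\fp\mid m$ (and $[L:Z]_\fp\le 2$ at real places).

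The main obstacle lies in the careful bookkeeping of local orders in the construction of $\alpha$: the required order of $\inv_\fp\alpha$ at each $\fp\in S$ is dictated by the $m$-part $g_\fp$ of the residue degree in $Z$, and the auxiliary prime $\fp_0$ must be chosen so that its residue degree in $Z$ equals $\lcm_{\fp\in S}g_\fp$, ensuring that the potentially large-order balancing invariant $\inv_{\fp_0}\alpha$ contributes to $\gamma$ an invariant of order dividing $m$ after multiplication by $[Z_{\PP_0}:K_{\fp_0}]$. This is feasible because $\lcm g_\fp$ divides the exponent of $\Gal(Z/K)$, so Chebotarev density applied to $Z/K$ produces infinitely many such $\fp_0$. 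A minor secondary point is the separate treatment of real primes, where the local Brauer group $\Br(\R)=\tfrac{1}{2}\Z/\Z$ forces $\inv_\fp\alpha\in\{0,\tfrac{1}{2}\}$ and the equality $[L:Z]_\fp=d_\fp(m)$ then follows since the local degree of any cover is at most $2$ at real primes.
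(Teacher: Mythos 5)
Your argument for $(B_m)\Rightarrow(\overline{A}_m)$ is essentially the paper's: apply $(B_m)$ to the (finite) set of primes below the nonzero local invariants of the class and conclude by \eqref{ABHN.equ}; your slightly larger $S_\gamma$ changes nothing since $(B_m)$ quantifies over all $S$.

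For $(\overline{A}_m)\Rightarrow(B'_m)$ you take a genuinely different and more self-contained route. The paper fixes $T$ to consist, for each prime $p\divides m$, of the two primes of $K$ with largest $p$-part of local degree in $Z$, splits $S$ into $S'$ and $S\setminus S'$ according to whether $\beta$ already attains $d_\fp(m)$ locally, and invokes the cited [Lemma~2.5, hns:existence-bounds] as a black box to produce $\alpha$ with $\ind\alpha^Z=m$, $\ind_\PP\alpha^Z=d_\fp(m)$ on $S'$, and $\ind_\PP\alpha^Z=1$ on $S\setminus S'$. You instead put the ramification locus of the fixed $\beta$ (together with that of $Z/K$) into $T$, so that $\beta$ is locally trivial over every $\fp\in S$, eliminating the need for the $S'$ versus $S\setminus S'$ case distinction, and you build $\alpha$ by hand from the fundamental exact sequence: order $mg_\fp$ at each finite $\fp\in S$, where $g_\fp$ is the $m$-part of $[Z_\PP{\hsp:\hsp}K_\fp]$, the standard $\{0,\tfrac12\}$ values at real places, and a Chebotarev-supplied auxiliary prime $\fp_0\notin S\cup T$ of residue degree $\lcm_\fp g_\fp$ to absorb the reciprocity constraint. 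The arithmetic does check out: $\gcd(mg_\fp,[Z_\PP{\hsp:\hsp}K_\fp])=g_\fp$ gives $\ord(\inv_\PP\alpha^Z)=m$ over finite $\fp\in S$, and $d\divides mg$ implies $d/\gcd(d,g)\divides m$, so $\inv_{\PP_0}\alpha^Z$ has order dividing $m$; the required element of order $\lcm_\fp g_\fp$ in $\Gal(Z/K)$ exists because that lcm divides the exponent of the abelian group. The conclusion $\ind\gamma=m$ holds because $\gamma$ agrees with $\beta$ over $T$ and $\exp\beta=m$, with every other local order dividing $m$. In effect you have reproven the relevant content of [Lemma~2.5] in place; what the paper's approach buys is a fixed, $S$-independent set of auxiliary primes built into $T$ (useful for the index-bound remarks elsewhere in the paper), while yours buys elementarity and avoids the $S'$ bookkeeping.
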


\begin{proof} $(B_m)\Rightarrow (\oline{A}_m)$:  Let
$\beta\in\oline\fC_m$, and $S$ the set of primes $\fp$ of $K$ such
that the restriction of $\beta$ to $Z_\PP$ is nontrivial for some
prime $\PP\divides \fp$ of $Z$. Applying $(B_m)$ to $S$, we obtain
an $m$-cover $L$ of $Z$ with full local degree in $S$.
By \eqref{ABHN.equ}, $L$ splits $\beta$, as required.

$(\oline{A}_m)\Rightarrow (B'_m)$:
For every $p\,\big|\, [Z:K]$, let $\fp_1^{(p)},\fp_2^{(p)}, \ldots$
be any enumeration of the primes of $K$  so that
$p^n\,\big|\, [Z:K]_{\fp^{(p)}_{i+1}}$ implies
$p^n\,\big|\, [Z:K]_{\fp^{(p)}_i}$ for all $i,n\in\mathbb{N}$.
Let $T$ be the set
$\{\fp_1^{(p)},\fp_2^{(p)}\,|\, p\text{ divides } m\}$.

Let $S$ be  disjoint from $T$ and $\beta\in\oline\fC_m$. Define
$S'$ to be the subset of primes $\fp\in S$ for which
$\ind_\PP\beta<d_\fp(m)$ for all $\PP\divides \fp$. Since $S$ is
disjoint from $T$, we can apply \cite[Lemma 2.5]{hns:existence-bounds}
to obtain a class $\alpha\in \Br(K)$ such that $\ind\alpha^Z=m$,
and $\ind_\PP\alpha^Z=d_\fp(m)$  for all $\fp\in S'$ and all $\PP\divides \fp$.
Furthermore, the proof of \cite[Lemma 2.5]{hns:existence-bounds} gives
$\ind_\PP\alpha^Z=1$ for all $\PP\divides\fp$ with $\fp\in S\setminus S'$.

Let $\gamma:=\alpha^Z+\beta$. Since
$\exp_\PP\beta<\exp_\PP\alpha^Z=d_\fp(m)$ for all
$\PP\divides\fp$, $\fp\in S'$,
we have $\ind_\PP\gamma=\exp_\PP\gamma=d_\fp(m)$ for all
$\PP\divides \fp,\fp\in S'$.  For every $\fp\in S\setminus S'$ there
is a prime  $\PP\divides \fp$, such that  $\ind_\PP\beta=d_\fp(m)$,
and hence, as $\ind_\PP\alpha^Z=1$, one has  $\ind_\PP\gamma=d_\fp(m)$.
By enlarging~$S$, we may assume that $S'$ contains a finite prime $\fp$ and
hence that $\ind\gamma=\ind_\PP\gamma = m$, where $\PP\divides \fp$.

By applying $(A_m)$ to $\gamma$, we obtain an $m$-cover of $Z/K$ which
splits~$\gamma$. Thus, by~ \eqref{ABHN.equ},  $L$ has full local degree
in $S$, as required.
\end{proof}

\begin{rem}\label{negation.rem} Assume $\fC_m\neq\varnothing$. The proof
reveals that  if $(B'_m)$ fails then there are in fact infinitely many
noncrossed products in $\fC_m$.  Indeed if $(B'_m)$ fails for $S$, it
fails for every set $S'$ which contains $S$ and is disjoint from $T$.
Since  $\gamma\in \oline\fC_m$ was constructed with  nontrivial
completions at all primes of  $S'$, there are infinitely many
$\gamma\in\oline\fC_m$ for which $(\oline A_m)$ fails. Thus, there are
infinitely many noncrossed products in $\fC_m$.
\end{rem}

\subsection{Proof of Theorem  \ref{main.cor}}

Set $\ell=\charak(K)$. We use the following lemmas:

\begin{lem}$($\cite[Lemma 2.12]{hns:existence-bounds}$)$\label{previous.lem}
There is a finite set $S_0$ such that every $p^n$-cover $L$ with full
local degree in $S_0$ has abelian kernel $A=\Gal(L/Z)$, for which the
conjugation action of $\Gal(Z/Z')$ on $A$ is trivial.
\end{lem}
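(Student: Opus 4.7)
The plan is to reformulate the two conclusions as a single abelianness statement about one subgroup, and then construct $S_0$ via Chebotarev density together with an analysis of the pro-$p$ tower above $Z$.

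\textbf{Reformulation.} Write $G = \Gal(L/K)$, $A = \Gal(L/Z)$, and $H' = \Gal(Z/Z')$. The subgroup $\Gal(L/Z') \subseteq G$ fits in a short exact sequence $1 \to A \to \Gal(L/Z') \to H' \to 1$, and $H'$ is abelian (as a subgroup of the abelian $\Gal(Z/K)$). Hence $\Gal(L/Z')$ is abelian if and only if $A$ is abelian and lies in the center of $\Gal(L/Z')$, which is precisely the conjunction of ``$A$ abelian'' and ``$H'$ acts trivially on $A$ by conjugation''. It therefore suffices to choose $S_0$ so that $\Gal(L/Z')$ is abelian for every $p^n$-cover $L$ with full local degree in $S_0$.

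\textbf{Construction of $S_0$.} I would take $S_0$ to consist of two groups of primes of $K$: (a) the primes above $p$ and above $\charak K$ together with the primes of $K$ ramifying in $Z/K$; (b) a finite set of auxiliary primes $\fp$ of $K$, unramified in $Z/K$ and coprime to $p\cdot\charak K$, chosen by Chebotarev density so that their Frobenius classes in $\Gal(Z/K)$ cover a generating set of $H'$ (and also realize enough of $\Gal(Z/K)$ to detect the full abelian structure).

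\textbf{Local analysis at type-(b) primes.} At such a prime $\fp \in S_0$, the full local degree condition at $\fp$ means that for any $\PP \mid \fp$ in $L$ one has $[L:Z]_{\fp} = p^n$, i.e.\ the decomposition subgroup $D_\PP \cap A$ of $\PP$ in $A$ is all of $A$; in particular $A \subseteq D_\PP$. The image of $D_\PP$ in $H = G/A$ equals the decomposition group of $\fp$ in $Z/K$, which by Chebotarev choice is a prescribed cyclic subgroup $C \subseteq H'$. Since $\fp \nmid p\cdot \charak K$, the extension $L_\PP/K_\fp$ is tamely ramified, so $D_\PP$ is metacyclic with cyclic tame inertia $I_\PP$, where $I_\PP \subseteq A$ (because $Z/K$ is unramified at $\fp$) and the Frobenius lift acts on $I_\PP$ by the $|\kappa(\fp)|$-th power map.

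\textbf{Passage to the pro-$p$ limit.} Assembling the local data from all primes of $S_0$ to force $\Gal(L/Z')$ abelian for every $n$ uniformly requires a global argument. I would consider the maximal pro-$p$ extension $Z^{(p)}$ of $Z$ that is Galois over $K$ and in which every prime of $K$ in $S_0$ has full local degree in the $\Gal(Z^{(p)}/Z)$-direction. Every $p^n$-cover $L$ satisfying the hypotheses is a subextension of $Z^{(p)}$, so it is enough to show that $\Gal(Z^{(p)}/Z')$ is abelian. By class field theory over $Z$, the pro-$p$ Galois group $\Gal(Z^{(p)}/Z)$ is a finitely generated $\mathbb{Z}_p$-module carrying an action of $\Gal(Z/K)$, and the type-(b) primes (which sit below primes of $Z$ whose Frobenius in $\Gal(Z^{(p)}/Z)$ can be identified with a local uniformizer class) allow one to read off the $H$-action on this module from the tame local decomposition data computed in the previous step. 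The key calculation is that the Frobenius at each type-(b) prime is forced to commute with $A$ modulo inertia, and the inertia contribution is likewise central, so the $H'$-action on $\Gal(Z^{(p)}/Z)$ is trivial and the module is abelian. Passing to the finite quotient $\Gal(L/Z)$ yields the desired conclusion.

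\textbf{Main obstacle.} The delicate point is uniformity in $n$: a fixed prime $\fp$ has fixed $|\kappa(\fp)|$, whereas forcing the Frobenius to act trivially on a tame inertia of order $p^n$ by the naive choice $|\kappa(\fp)|\equiv 1\pmod{p^n}$ would require $n$-dependent primes. The resolution is to do the argument at the level of the pro-$p$ module $\Gal(Z^{(p)}/Z)$ rather than one cover at a time: Chebotarev is used to produce enough Frobenius elements to pin down the $H$-action on this pro-$p$ module, and class-field-theoretic finiteness lets a single finite $S_0$ suffice for all $n$ simultaneously.
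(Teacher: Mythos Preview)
The paper does not supply its own proof of this lemma: it is imported wholesale as \cite[Lemma~2.12]{hns:existence-bounds} and used as a black box in the proof of Theorem~\ref{main.cor}. So there is no in-paper argument to compare your proposal against.

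That said, your outline has real gaps. The reformulation (abelianness of $\Gal(L/Z')$) is correct, and the local analysis at your type-(b) primes is on the right track: full local degree does force $A\subseteq D_\PP$ for a tame decomposition group $D_\PP$, hence $A$ is metacyclic. But the ``passage to the pro-$p$ limit'' is where the argument breaks down. First, the object $Z^{(p)}$ is not obviously well-defined in the way you need: the compositum of two $p^n$-covers each having full local degree in $S_0$ need not itself have full local degree there (local degrees can collapse under compositum), so there is no reason every admissible $L$ embeds into a single pro-$p$ tower cut out by that condition. Second, and more seriously, when you write that ``$\Gal(Z^{(p)}/Z)$ is a finitely generated $\mathbb{Z}_p$-module carrying an action of $\Gal(Z/K)$'', you are already assuming this Galois group is abelian --- which is exactly the conclusion to be proved. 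The ``key calculation'' that Frobenius is forced to commute with $A$ is asserted but never carried out; your own ``main obstacle'' paragraph correctly identifies why the naive version (choosing $|\kappa(\fp)|\equiv 1\pmod{p^n}$) cannot give an $n$-independent $S_0$, and the resolution you then offer is precisely the circular pro-$p$ reduction just described.

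What is missing is a genuine finite, $n$-independent mechanism that converts the local metacyclic constraints at \emph{several} carefully chosen primes into abelianness of $A$ (and triviality of the $\Gal(Z/Z')$-action). That group-theoretic core is what the cited reference provides, and your sketch does not yet contain a substitute for it.
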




\begin{lem}\label{subcover.lem} Let $p\neq \ell$ be a prime. There exists
a finite set $S_0$ disjoint from $T$ such that every  $p^n$-cover $L$
of $Z/K$ with full local degree in $S_0$ contains a $p^n$-cover $L'$
of $Z'/K$.

Furthermore, if $L$ has full local degree in a set $S\supseteq S_0$
then so does~  $L'$.
\end{lem}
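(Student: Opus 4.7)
The plan is to produce $L'$ as the fixed field in $L$ of a canonical normal subgroup of $\Gal(L/K)$, exploiting the coprimality of $[L:Z]=p^n$ and $[Z:Z']=\ell^k$.

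I would first choose $S_0$ to contain the set produced by Lemma~\ref{previous.lem}, adjusted so that $S_0$ is disjoint from $T$; this should require only minor modification of the Chebotarev-type construction in \cite{hns:existence-bounds}, which has enough flexibility to avoid the finitely many primes of~$T$. Given any $p^n$-cover $L$ of $Z/K$ with full local degree in $S_0$, Lemma~\ref{previous.lem} then yields that $A:=\Gal(L/Z)$ is abelian and that $H:=\Gal(Z/Z')$ acts trivially on $A$ by conjugation. As $\gcd(|A|,|H|)=\gcd(p^n,\ell^k)=1$ and the action is trivial, Schur--Zassenhaus gives
$$
\tilde H \,:=\, \Gal(L/Z') \,\cong\, A \times H.
$$

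Next I would set $L':=L^H$ and verify it is a $p^n$-cover of $Z'/K$. Since $A$ and $H$ have coprime orders, $H$ is the unique Sylow $\ell$-subgroup of~$\tilde H$, hence characteristic in~$\tilde H$. Because $Z'/K$ is Galois (as a subextension of the abelian extension $Z/K$), $\tilde H$ is normal in $G:=\Gal(L/K)$, whence $H\triangleleft G$ and $L'/K$ is Galois. By construction $L'\supseteq L^{\tilde H}=Z'$ and $[L':Z']=[\tilde H:H]=|A|=p^n$.

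For the furthermore clause, I would read local degrees off decomposition groups. Fix $\fp\in S$ and a prime $\PP$ of $L$ above~$\fp$ with decomposition group $D\subseteq G$. The key structural remark is that any subgroup of a direct product $A\times H$ with coprime factor orders decomposes as the direct product of its intersections with the factors; applied to $D\cap\tilde H$ this gives $|D\cap\tilde H|=|D\cap A|\cdot|D\cap H|$, so
$$
[L'_{\PP\cap L'}:Z'_{\PP\cap Z'}] \,=\, |D\cap A| \,=\, [L_\PP:Z_{\PP\cap Z}] \,=\, [L:Z]_\fp \,=\, d_\fp(p^n).
$$
A short case check then confirms that $d_\fp(p^n)$ is the same whether computed relative to~$Z$ or to~$Z'$: the finite-prime case and the real-prime case with $p$ odd are immediate, while for $p=2$ one has $\ell$ odd, hence $[Z:Z']$ odd, so no real prime of~$K$ can change ramification status between~$Z'$ and~$Z$. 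The step I expect to be the main obstacle is the very first one --- arranging $S_0$ disjoint from~$T$ while retaining the conclusion of Lemma~\ref{previous.lem}; all subsequent steps reduce to clean group theory and local-degree bookkeeping.
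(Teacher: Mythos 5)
Your proposal is correct and follows essentially the same route as the paper: invoke Lemma~\ref{previous.lem} to get $A$ abelian with trivial $\Gal(Z/Z')$-action, apply Schur--Zassenhaus to split $1\to A\to\Gal(L/Z')\to\Gal(Z/Z')\to 1$, observe the complement is the Sylow $\ell$-subgroup hence characteristic in $\Gal(L/Z')\triangleleft G$, and take $L'$ as its fixed field. For the ``furthermore'' clause the paper simply remarks that completions of $L'$ and $Z$ are linearly disjoint over a completion of $Z'$, which is the same fact you unpack via decomposition groups; your explicit check that $d_\fp(p^n)$ agrees relative to $Z$ and $Z'$ (using that $[Z:Z']$ is prime to $p$) is a detail the paper leaves implicit and is worth keeping.
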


\begin{proof}
Let $S_0$ be as in Lemma \ref{previous.lem} and let $B_\ell=\Gal(Z/Z')$.
Let ${A_\ell=\Gal(L/Z')}$
Since $|A|$ and $|B_\ell|$ are relatively prime,
the group extension
$$
1\, \longrightarrow \, A \, \longrightarrow \,
A_\ell\, \longrightarrow \, B_\ell\, \longrightarrow \, 1
$$
is split by the Schur-Zassenhaus theorem.
Since $B_\ell$ acts trivially on~$A$,  ${A_\ell=A\oplus \hat B_\ell}$ with
$\hat B_\ell\cong B_\ell$.
In particular, $A_\ell$ is abelian. Letting ${G=\Gal(L/K)}$ and
$B=\Gal(Z'/K)$
 the group extension ${1\to A_\ell\to G\to  B\to 1}$  induces an action
of $B$ on $A_\ell$.
Being a characteristic subgroup of $A_\ell$, $\hat B_\ell$ is $B$-invariant,
hence normal in $G$.
The fixed field $L'\subseteq L$ of $\hat B_\ell$ is then a cover of
$Z'/K$ with associated group extension
$$
1\, \longrightarrow \, A_\ell/\hat B_\ell\, \longrightarrow \, G/\hat B_\ell
\, \longrightarrow \, B\, \longrightarrow \, 1.
$$
Since $A_\ell/\hat B_l\cong A$ it is a $p^n$-cover.

Full local degree in $S$ is inherited from $L$ since the completions
of $L'$ and of $Z$ are linearly disjoint over a completion of $Z'$.
\end{proof}



\begin{proof}[Proof of Theorem \ref{main.cor}]
Fix $\beta\in\oline\fC$ and let $m=\ind\beta$. By \eqref{residue.equ}
we see that $\fC_{m'}\neq \varnothing$ for every $m\divides m'$.
For a prime $p\neq \ell$, let $p^{s_p}$ (resp.~ $2^{r_2}$ if $p=2$) denote
the number of $p$-power (resp.~ $2$-power) roots of unity in~$Z$
(resp.~ in $Z(\sqrt{-1})$).

Assume $Z'/K$ is cyclic of finite height or noncyclic. We first claim
that there is a prime $p$ for which  $(B_{p^n}')$ fails for all
sufficiently large $n$. Assume first that $Z'/K$ is noncyclic and
let $p\neq \ell$ be a prime for which the $p$-Sylow subgroup of
$\Gal(Z/K)$ is noncyclic. By
\cite[Proposition 3.3]{hns:existence-bounds}, $(B'_{p^n})$ fails for
all $n>2s_p$ if $p$ is odd and for $n>2(r_2+2)$ if $p=2$.

Assume next that $Z'/K$ is cyclic of finite height, and fix
$m\in\mathbb{N}$ such that $Z'/K$ has no cyclic $m$-cover. We can
further assume that  $m$ is a prime power. Indeed, writing
$m=\prod_{p}p^{n_p}$, $p$ prime, if there are cyclic $p^{n_p}$-covers
for all $p\divides m$, their composite gives a cyclic $m$-cover of
$Z'/K$. Let $m=p^{n_p}
$ and let $n>n_p+s_p+1$.
By \cite[Theorem 6.4]{hanke-sonn:location}\footnote{If $Z'/K$ is
non-exceptional we can choose $n:=n_p+s_p+1$, see
\cite{hanke-sonn:location}.}, there is a set $S$ disjoint from $T$,
such that $Z'/K$ has no $p^n$-cover with full local degree in $S$.
By Lemma \ref{subcover.lem}, there is a finite set $S_0$ such that
$Z/K$ has no $p^n$-cover $L$ with full local degree in $S_0\cup S$.
Hence, $(B'_{p^n})$ fails, proving the claim.

Fix an $n$ for which $(B_{p^n}')$ fails and such that $p^n$ is at
least the largest $p$-power dividing $m$. Letting $m'=\lcm(m,p^n)$, by
Lemma \ref{previous.lem} every $m'$-cover with full local degree in a
set $S\supseteq S_0$ has an abelian kernel and hence contains a
$p^n$-cover with full local degree in $S$. Hence, $(B_{m'}')$~fails.
As $\fC_{m'}\neq\varnothing$, Proposition \ref{equiv.prop}  implies that
$(A_{m'})$ fails.

Conversely, assume  that $Z'/K$ is cyclic of infinite height.
By \cite[Theorem 6.3]{hanke-sonn:location}, for every $m$ prime to $\ell$,
$Z'/K$ has an $m$-cover with full local degree. Taking composites with
$Z$, we get that $(B_m)$ holds for every $m$ prime to $\ell$. By
\cite[Lemma 2.10]{hns:existence-bounds},  $(B_{\ell^n})$ holds for all
$n$. By taking composites, we see that $(B_m)$ holds for all
$m\in\mathbb{N}$. Thus, by Proposition~\ref{equiv.prop}, $(A_m)$ holds
for all~$m$.
\end{proof}

\begin{rem}\label{bounds.rem} Assuming $\fC$ does not consist of crossed
products, the proof reveals that there is $p^{n_p}\in \mathbb{N}$, $p$
prime,  such that $(A_m)$ fails for every $m\in \mathbb{N}$ with
$\fC_m\neq \varnothing$ and $p^{n_p}\divides m$.
By Remark \ref{negation.rem}, for such $m$, $\fC_m$~contains infinitely
many noncrossed products.
If the $p$-Sylow subgroup of $\Gal(Z/K)$ is noncyclic and $p\neq \ell$, we
can choose ${n_p=2s_p+1}$ if $p$ is odd and $n_2=2(r_2+2)+1$ if $p=2$.
If $Z'/K$ is cyclic with no cyclic $p^{k_p}$-cover, we can choose
$n_p=k_p+s_p+2$.
\end{rem}

\bibliographystyle{plain}

\end{document}